\newtheorem{theorem}{Theorem}[section]
\newtheorem{corollary}{Corollary}
\newtheorem{lemma}[theorem]{Lemma}
\theoremstyle{definition}
\newtheorem{definition}[theorem]{Definition}
\newtheorem{remark}{Remark}
\numberwithin{equation}{section}
\newcommand{\ucirc}{\mathbb S}
\newcommand{\uc}{\mathbb S}
\newcommand{\reals}{\mathbb R}
\newcommand{\comps}{\mathbb C}
\newcommand{\C}{\mathcal C}
\newcommand{\ch}{\mbox{$\mathrm{Ch}$}}
\newcommand{\ints}{\mathbb Z}
\newcommand{\A}{\mathcal A}
\newcommand{\AP}{\mathcal A\mathcal P}
\newcommand{\WT}{\mathcal W\mathcal T}
\newcommand{\disk}{\mathbb D}
\newcommand{\si}{\sigma}
\newcommand{\Ta}{\Theta}
\newcommand{\ta}{\theta}
\newcommand{\ga}{\gamma}
\newcommand{\complex}{\mathbb C}
\newcommand{\rsphere}{\complex_\infty}
\newcommand{\e}{\varepsilon}
\newcommand{\bbd}{{\mathbb D}}
\newcommand{\bbc}{\mathbb C}
\newcommand{\W}{\mathcal{W}}
\newcommand{\K}{\mathcal{K}}
\newcommand{\ol}{\overline}
\newcommand{\mk}{\mathbf{k}}
\newcommand{\mc}{\mathbf{c}}
\newcommand{\md}{\mathbf{d}}
\newcommand{\mg}{\mathbf{g}}
\newcommand{\mh}{\mathbf{h}}
\newcommand{\T}{\mathbf{T}}
\newcommand{\G}{{\Gamma}}
\newcommand{\0}{\emptyset}
\newcommand{\bd}{\text{Bd}}
\newcommand{\g}{\mathbf{g}}
\newcommand{\h}{\mathbf{h}}
\newcommand{\Sc}{\mathcal S}
\newcommand{\sm}{\setminus}
\newcommand{\Pro}{\mathcal P}
\newcommand{\al}{\alpha}
\newcommand{\be}{\beta}
\DeclareMathOperator{\diam}{diam}
\title[Laminations and critical portraits]
      {Density of orbits in laminations and the space
of critical portraits}
\author[A. Blokh, C. Curry and L. Oversteegen]{}
\subjclass{Primary 37F20; Secondary 37B45, 37F10, 37F50}
 \keywords{Complex dynamics, locally connected, Julia set, lamination,
  wandering gaps.}
 \email{ablokh@math.uab.edu}
 \email{ccurry@huntingdon.edu}
 \email{overstee@math.uab.edu}
\thanks{The first named author is supported by NSF grant DMS-0901038.
The third named author is supported by NSF grant DMS-0906316.}
\begin{document}
\maketitle

\centerline{\scshape Alexander Blokh}
\medskip
{\footnotesize
 \centerline{Department of Mathematics}
   \centerline{University of Alabama at Birmingham}
   \centerline{ Birmingham, AL 35294-1170, USA}
} 


 \medskip

\centerline{\scshape Clinton Curry}
\medskip
{\footnotesize
 \centerline{Department of Mathematics}
   \centerline{1500 East Fairview Avenue}
   \centerline{Montgomery, AL 36106, USA}
}

\medskip

\centerline{\scshape Lex Oversteegen}
\medskip
{\footnotesize
 \centerline{Department of Mathematics}
   \centerline{University of Alabama at Birmingham}
   \centerline{ Birmingham, AL 35294-1170, USA}
}

\bigskip





 \centerline{(Communicated by the associate editor Sebastian van Strien)}

\begin{abstract}
Thurston introduced $\si_d$-invariant laminations (where $\si_d(z)$
  coincides with $z^d:\ucirc\to \ucirc$, $d\ge 2$). He defined
  \emph{wandering $k$-gons} as sets $\T\subset \ucirc$ such that
  $\si_d^n(\T)$ consists of $k\ge 3$ distinct points for all $n\ge 0$
  and the convex hulls of all the sets $\si_d^n(\T)$ in the plane are
  pairwise disjoint. Thurston proved that $\si_2$ has no wandering $k$-gons
  and posed the problem of their existence for $\si_d$,\, $d\ge 3$.

  Call a lamination with wandering $k$-gons a \emph{WT-lamination}.
  Denote the set of cubic critical portraits by $\A_3$. A critical
  portrait, compatible with a WT-lamination, is called a
  \emph{WT-critical portrait}; let $\WT_3$ be the set of all of them.
  It was recently shown by the authors that cubic WT-laminations exist
  and cubic WT-critical portraits, defining polynomials with
  \emph{condense} orbits of vertices of order three in their dendritic
  Julia sets, are dense and locally uncountable in $\A_3$ ($D\subset
  X$ is \emph{condense in $X$} if $D$ intersects every subcontinuum of
  $X$).  Here we show that $\WT_3$ is a dense first category subset of
  $\A_3$, that critical portraits, whose laminations
  have a condense orbit in the topological Julia set, form a residual
  subset of $\A_3$, and that the existence of a condense orbit in the Julia
  set $J$ implies that $J$ is locally connected.
\end{abstract}




\section{Introduction}\label{intro}

Let $\complex$ be the complex plane and
$\rsphere=\complex\cup\{\infty\}$ be the complex sphere.  The
following result is a special case of a theorem due to Thurston
\cite{thu85}.

\begin{theorem}[No Wandering Vertices for Quadratics]\label{t:nowanpol}
  Let $P(z) = z^2 + c$ be a polynomial with connected Julia set $J_P$.
  If $z_0 \in J_P$ is a point such that $J_P \setminus \{z_0\}$ has at
  least three components, then $z_0$ is either preperiodic or
  eventually maps to the critical point $0$.
\end{theorem}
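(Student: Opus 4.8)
The plan is to translate the statement about cut points of $J_P$ into Thurston's combinatorial setting on the unit circle and then to invoke the non-existence of wandering $k$-gons for $\si_2$. First I would use the B\"ottcher coordinate at $\infty$ to define the external rays $R_\theta$, $\theta\in\ucirc$, of the connected Julia set $J_P$, recalling that $P$ acts on external angles exactly as $\si_2$. For the cut point $z_0$, where $J_P\setminus\{z_0\}$ has $N\ge 3$ components, a standard ray-landing argument (each complementary component is accessed, and rays landing at $z_0$ separate the plane into sectors) shows that at least three external rays land at $z_0$, with distinct angles that are cyclically separated by the complementary components. I collect three such angles into a set $\T\subset\ucirc$ with $\abs{\T}=k\ge 3$.

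Next I would show that the two exceptional conclusions of the theorem correspond precisely to the two ways in which $\T$ can fail to be a wandering $k$-gon. Since external angles double under $P$, the angles of rays landing at $P^n(z_0)$ are exactly $\si_2^n(\T)$. Suppose $z_0$ is neither preperiodic nor eventually critical. Then the orbit $\{P^n(z_0)\}$ is infinite and consists of distinct cut points, so the convex hulls of the sets $\si_2^n(\T)$ are pairwise disjoint, because rays landing at distinct points are unlinked and their chords cannot cross. Moreover, since $P^n(z_0)\ne 0$ for all $n$, the map $\si_2$ never folds two of the relevant angles together, so $\abs{\si_2^n(\T)}=k$ for every $n$. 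Thus $\T$ is a wandering $k$-gon in Thurston's sense with $k\ge 3$, contradicting the No Wandering Triangles Theorem for $\si_2$; this contradiction forces $z_0$ to be preperiodic or to map eventually to the critical point $0$.

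The genuinely substantial part is the metric argument behind the non-existence of wandering $k$-gons, which I would carry out directly on $\ucirc$. I measure each complementary arc of $\si_2^n(\T)$ by its length and exploit that $\si_2$ stretches every arc of length $<\tfrac12$ by the factor $2$, with at most one complementary arc per gon failing this, namely the arc straddling a diameter. Since the gons are pairwise unlinked, their complementary arcs are nested or disjoint across different levels $n$, so the "short" arcs occurring at different times cannot overlap on $\ucirc$. Combining the expansion by $2$ with this disjointness yields a suitable sum of short-arc lengths over all $n$ that is simultaneously summable (by disjointness and finite total length) and bounded below by a positive constant, the latter because a $k$-gon with $k\ge 3$ always carries at least two short complementary arcs that get expanded. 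This contradiction completes the argument.

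I expect the main obstacle to be exactly the bookkeeping in the last step: controlling the single folding arc at each level and proving rigorously that unlinkedness prevents the expanded short arcs from accumulating, so that doubling of length cannot be sustained indefinitely. A secondary point requiring care is the ray-landing step, where one must guarantee at least three rays landing at a valence-$\ge 3$ cut point without presupposing local connectivity of $J_P$; this is handled by the sector/separation structure of landing rays rather than by a global parametrization of $J_P$.
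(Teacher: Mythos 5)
First, a point of comparison: the paper does \emph{not} prove Theorem~\ref{t:nowanpol} at all; it is quoted as a special case of Thurston's theorem and supported only by the citation \cite{thu85}. So your proposal is measured against Thurston's argument (and the paper's surrounding machinery), not against a proof in the text. Your reduction steps are the standard and correct route: collect the angles of external rays landing at $z_0$, observe that non-preperiodicity makes the points $P^n(z_0)$ distinct so that the hulls $\ch(\si_2^n(\T))$ are pairwise disjoint (rays landing at distinct points give unlinked, vertex-disjoint inscribed polygons), and observe that $P^n(z_0)\ne 0$ prevents two angles of $\si_2^n(\T)$ from being antipodal, so $|\si_2^n(\T)|$ stays constant. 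One caveat: the very first step, that a point $z_0$ with $J_P\setminus\{z_0\}$ having at least three components is the landing point of at least three rays \emph{without assuming local connectivity}, is a genuine theorem requiring prime-end/crosscut arguments (it is the kind of statement established and used in \cite{blolev99, kiwi97, bco11}), not a routine observation; your proposal only gestures at it.

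The genuine gap is your Part 3, the proof that $\si_2$ has no wandering $k$-gons, and it fails for concrete reasons. (i) Your intended contradiction --- a sum of short-arc lengths that is ``simultaneously summable and bounded below by a positive constant'' --- is not a contradiction; you would need infinitely many terms each bounded below by a \emph{fixed} positive constant. (ii) No such bound can exist: since the hulls $\ch(\si_2^n(\T))$ are pairwise disjoint subsets of $\disk$, their areas are summable and hence tend to zero, which forces the shortest side (and shortest complementary arc) of $\si_2^n(\T)$ to tend to zero; any argument positing a uniform lower bound on the short arcs is therefore doomed from the start. (iii) The disjointness you invoke for summability is false: if $\si_2^m(\T)$ lies in a complementary arc $A$ of $\si_2^n(\T)$, then two of its complementary arcs are nested inside $A$ and the third properly overlaps $A$; nested arcs' lengths do not add up against the total length of $\uc$, so ``finite total length'' gives no bound. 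The real difficulty, which your sketch never engages, is that when the long arc folds, the image triangle can be far \emph{thinner} than its predecessor, so sizes can recurrently collapse; Thurston's proof rules this out by arguments specific to degree two (for instance, two unlinked triangles cannot both contain the center of $\disk$, combined with an analysis of the triangle at the times its span first exceeds $1/4$). That a naive ``expansion plus disjointness'' scheme cannot work is exactly the moral of this paper: wandering triangles \emph{do} exist for $\si_3$ (Theorem~\ref{main1}, \cite{bo04}), where arcs shorter than $1/3$ also expand. If you want a complete argument consistent with the paper's toolkit, note that Theorem~\ref{bl} (\cite{blolev99}) gives the combinatorial core immediately once the wandering triangle is realized as a class of a $2$-invariant lamination (a standard construction, cf. \cite{kiwigap}): for $d=2$ any non-degenerate class with degenerate image is a diameter, two diameters always cross, so $k_\sim\le 1$ and $\sum(d_j-2)\le k_\sim-1\le 0$, contradicting the existence of a wandering $d_j$-gon with $d_j\ge 3$.
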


In \cite{bo04} we construct an uncountable family of cubic polynomials
$P$ with $z_0\in J_P$ such that $J_P \setminus \{z_0\}$ has three
components and $z_0$ is neither preperiodic nor precritical; such a
point is called a \emph{wandering vertex}. In \cite{bco10}, we improve
on these results by finding a collection of polynomials, dense in the
appropriate parameter space, with wandering vertices whose orbits have
a property that we call \emph{condensity}.

\begin{definition}\label{condens}
  For a topological space $X$ a set $A\subset X$ is
  \emph{continuumwise dense} (abbreviated \emph{condense}) in $X$ if
  $A\cap Z\ne \0$ for each non-degenerate continuum $Z\subset X$.  A
  map $f:X \to X$ is also called condense if there exists $x_0 \in X$
  such that $\{f^n(x_0)\,\mid\,n \ge 0\}$ is condense in $X$.
\end{definition}

It is not hard to see that condensity is much stronger than density.
For example, if $J$ is a Julia set from the real quadratic family
which is not homeomorphic to an interval, the set of endpoints is
dense in $J$, but not condense. Moreover, in this case the set of
transitive points (i.e., points with dense orbit in $J$) is a subset
of the endpoints of $J$, so such maps are not condense.

To state the results of \cite{bco11} precisely, we must indicate in
which parameter space we are working. Polynomials are
naturally 
associated to \emph{critical portraits}, introduced by Yuval Fisher in
his Ph.D. thesis \cite{fish89}.  Let $\sigma_d:\ucirc \to \ucirc$ be
the angle $d$-tupling map $\si_d(z)=z^d$.  A degree $d$ critical
portrait, loosely speaking, is a \emph{maximal} collection $\Theta =
\{\Theta_1, \ldots, \Theta_n\}$ of sets of angles in $\ucirc$ which
are pairwise disjoint, pairwise unlinked (i.e., have disjoint convex
hulls in $\overline{\mathbb D}$ when angles are interpreted as points
in $\ucirc$), and such that $\sigma_d(\Theta_i)$ is a point for each
$\Theta_i \in \Theta$ (it is easy to see that $\sum (|\Ta_i|-1)=d-1$).

This notion is used to capture the location of critical points. The
set of all critical portraits of degree $d$ is denoted $\A_d$, and is
naturally endowed with a topology (see Definition~\ref{cu} for
details). We say that a \emph{critical portrait $\Ta$ corresponds to a
  polynomial $P$} with dendritic Julia set if for each $\Ta_i \in \Ta$
there is a distinct critical point $c_i \in J_P$ such that the
external rays whose angles are in $\Ta_i$ land at $c_i$ (see
Section~\ref{sec:critport} for more information). Now we state the
main theorem of \cite{bco10}.

\begin{theorem}[\cite{bco10}]\label{main1}
  $\A_3$ contains a dense locally uncountable set
  $\{\Ta_\alpha\,\mid\, \al\in A\}$ of critical portraits such that
  for each $\al\in A$ the following holds:
  \begin{itemize}
  \item $\Ta_\alpha$ corresponds to a polynomial $P_\alpha$ with
    dendritic Julia set $J_{P_\alpha}$,
  \item $\{P_\alpha|_{J_{P_\alpha}}\}$ are pairwise non-conjugate, and
  \item $J_{P_\alpha}$ contains a wandering vertex with condense
    orbit.
  \end{itemize}
\end{theorem}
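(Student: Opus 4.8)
The plan is to work entirely inside Thurston's theory of $\si_3$-invariant laminations and to pass to polynomials only at the very end. Recall that a generic cubic critical portrait in $\A_3$ is determined by two critical leaves, i.e.\ two chords of $\ucirc$ each joining a pair of angles differing by $1/3$; these chords cut $\ucirc$ into three arcs and thereby assign to every angle a symbolic \emph{itinerary} recording which arc each iterate $\si_3^n$ visits. My first step is to set up this coding and invoke the realization principle: a lamination all of whose leaves are eventually aperiodic generates a dendritic topological Julia set and, by the realization theorem for critical portraits, is induced by an honest cubic polynomial $P$ with dendritic $J_P$ via a quotient $\pi\colon \ucirc \To J_P$ whose fibers are exactly the equivalence classes of the lamination (points, leaves, and gaps). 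A wandering $3$-gon $\T=\{a,b,c\}$, being a gap whose three vertices lie in one class, then collapses to a wandering vertex $z_0=\pi(a)=\pi(b)=\pi(c)$; since the images $\si_3^n(\T)$ are again collapsing triangles, the forward orbit of $z_0$ is precisely $\pi$ of the $\si_3$-orbit of the single angle $a$.

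The second step is the construction of the wandering triangle, parametrizing the method of \cite{bo04}. I would build, by an inductive ``coding tree'' argument, the two critical leaves together with angles $a,b,c$, specifying at each stage $n$ the arc containing each of $\si_3^n(a),\si_3^n(b),\si_3^n(c)$. The two structural requirements are that (i) the three orbits never collide, so each $\si_3^n(\T)$ is a genuine triangle, and (ii) the convex hulls $\mathrm{conv}(\si_3^n\{a,b,c\})$ are pairwise disjoint, which is the wandering condition and is enforced by keeping the three itineraries suitably separated in the coding. The inductive choices are not rigid: at infinitely many stages one may route the orbit into either of several admissible arcs, and this freedom is what will yield both density and local uncountability of the resulting family.

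The third and hardest step is to upgrade the orbit of $z_0$ from dense to \emph{condense}. In the dendrite $J_P$ every nondegenerate subcontinuum is a subdendrite detected by a family of cylinders in angle space; since the orbit of $z_0$ is the $\pi$-image of the $\si_3$-orbit of the single angle $a$, it suffices to ensure that the itinerary of $a$ is \emph{disjunctive}, i.e.\ contains every admissible finite word as a factor, so that every cylinder is visited and hence every subcontinuum is met. The task is then to force disjunctiveness \emph{while preserving} conditions (i) and (ii). This is the crux and the main obstacle, since the two demands pull in opposite directions: wandering requires the three orbits to stay controlled and separated, whereas condensity requires one orbit to sweep densely through $\ucirc$. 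I would resolve this by interleaving a bookkeeping scheme into the coding tree—enumerate all admissible finite words and, at a suitable increasing sequence of stages, insert each word into the itinerary of $a$, then verify that the separation needed for (i) and (ii) can always be restored afterward. Checking that such insertions remain compatible with disjoint triangles for all time is the technical heart of the proof.

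Finally I would assemble the conclusions. Each completed lamination is realized by a cubic polynomial $P_\alpha$ with dendritic $J_{P_\alpha}$ carrying a wandering vertex of condense orbit, with $\alpha$ indexing the branches left free in Steps~2--3. Density of $\{\Ta_\alpha\}$ in $\A_3$ holds because the free choices match any prescribed portrait to arbitrary combinatorial precision, and local uncountability holds because within any neighborhood the construction retains a Cantor set of independent choices. For non-conjugacy, a topological conjugacy between the restrictions $P_\alpha|_{J_{P_\alpha}}$ lifts to a homeomorphism of $\ucirc$ commuting with $\si_3$ and matching laminations; since the centralizer of $\si_3$ is finite, the disjunctive itinerary of the wandering vertex is a conjugacy invariant up to finitely many symbol relabelings. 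Selecting the free parameters to realize pairwise distinct such invariants, while retaining density and local uncountability, produces the asserted pairwise non-conjugate family and completes the proof.
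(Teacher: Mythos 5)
First, a point of comparison: this paper does not actually prove Theorem~\ref{main1} --- it quotes it from \cite{bco10} --- so the relevant benchmark is the condensity machinery developed in Section~\ref{s:condens} and the construction of \cite{bo04, bco10}. Measured against that, your proposal has a genuine gap at exactly the step you yourself call the crux. In Step~3 you reduce condensity of the orbit of $z_0=\pi(a)$ to the itinerary of $a$ being disjunctive, ``so that every cylinder is visited and hence every subcontinuum is met.'' That implication is false, and its failure is precisely the difference between density and condensity. A cylinder is a finite union of nondegenerate closed arcs of $\ucirc$, and some iterate $\si_3^n$ maps it onto all of $\ucirc$; since $f_\sim$ is open and finite-to-one (hence carries closed nowhere dense subsets of $J_\sim$ to nowhere dense subsets), and some $f_\sim$-iterate of the $\pi$-image of a cylinder is all of $J_\sim$, that image must have nonempty interior in $J_\sim$. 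On the other hand, in the laminations you are constructing the points $\pi(\si_3^n(\T))$ are branch points of $J_\sim$ forming a dense set, and when branch points are dense every arc of $J_\sim$ is nowhere dense. Hence no arc of $J_\sim$ can contain the image of any cylinder, and ``the orbit visits every cylinder'' is simply equivalent to ``the orbit is dense in $J_\sim$''; it cannot force the orbit to meet a given nowhere dense arc. The paper is emphatic that condense is strictly stronger than dense (see the endpoints example after Definition~\ref{condens}), and its own condensity arguments (Lemma~\ref{l:densco}, Lemma~\ref{l:condens2}, Theorem~\ref{t:condenscript}) accordingly never work with cylinders: they work with a countable family of arcs $A_s\subset J_\sim$ such that every nondegenerate subcontinuum contains some $A_s$, combined with connectedness of unions of images (a connected dense subset of an unshielded locally connected continuum is condense) and Baire category. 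The angle sets corresponding to ``the orbit enters $A_s$'' are not cylinders, so your bookkeeping scheme inserts the wrong words; arranging that the prescribed vertex of a wandering triangle enters every $A_s$ while the triangle stays wandering is the actual technical heart of \cite{bco10}, and nothing in your outline addresses it.

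Second, Step~1 overstates the realization theorem. For $\Ta\in\AP_3$, Kiwi's theorem (Theorem~\ref{kiwistruct}) produces a polynomial $P$ together with a monotone \emph{semi}conjugacy $m:J_P\to J_{\sim_\Ta}$; it does not say that $J_P$ is locally connected, nor that the fibers of the Carath\'eodory map are exactly the $\sim_\Ta$-classes. Since Theorem~\ref{main1} asserts properties of the actual Julia set $J_{P_\alpha}$ (dendritic, containing a wandering vertex with condense orbit), you must upgrade the semiconjugacy to a conjugacy; this is exactly the role of Theorem~\ref{t:condens1} here (condensity of $f_\sim$ rules out proper periodic subcontinua, hence Cremer points, Siegel disks and renormalization, after which local connectivity follows from \cite{kvs09}). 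Without such a step your wandering vertex with condense orbit lives only in the topological model $J_{\sim_\Ta}$, not in $J_{P_\alpha}$. Finally, your non-conjugacy argument is under-justified: a topological conjugacy between $P_\alpha|_{J_{P_\alpha}}$ and $P_\beta|_{J_{P_\beta}}$ does not obviously lift to a circle homeomorphism commuting with $\si_3$, so that lift must either be proved or replaced by a direct conjugacy invariant, as in \cite{bo04, bco10}.
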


The aim of this paper is to further investigate the notions and
objects studied in Theorem~\ref{main1}, such as condensity and the set
of critical portraits which correspond to polynomials with wandering
vertices. To explain our results, we recall constructions from
\cite{kiwi97, bco11}: given a polynomial $P$ with connected Julia set
$J_P$, one can construct a corresponding locally connected continuum
$J\subset \comps$ (called a \emph{topological Julia set}) and branched
covering map $f:\comps\to \comps$ (called a \emph{topological
  polynomial}) so that $P$ is \emph{monotonically semiconjugate} to
$f$ (i.e., there exists a monotone map $m:\comps\to\comps$ such that
$m\circ P=f\circ m$) and $J=m(J_P)$. We refer to $f|_J$ as the
\emph{locally connected model} of $P$. It is known \cite{bo10} that in
some cases $J$ is a single point.

Let us describe the organization of the paper and the main results.
After discussing 
preliminary notions and history in Section~\ref{results}, we study
properties of condense maps in Section~\ref{s:condens}. In particular
we show in Theorem~\ref{t:condens1} that polynomials which admit
condense orbits either in their Julia sets (or in some circumstances
their locally connected models) have locally connected Julia sets. In
Section~\ref{famwt} we prove that the set of cubic critical portraits
corresponding to polynomials with condense orbits in their Julia sets
is residual in $\A_3$ (Theorem~\ref{t:condenscript}), while the set of
critical portraits which correspond to polynomials with wandering
vertices is meager (Theorem~\ref{t:meager}).

\section{Preliminaries}\label{results}

\subsection{Laminations}\label{sec:term}
In what follows, we parameterize the circle as $\ucirc = \reals /
\ints$, so the total arclength of $\ucirc$ is $1$. The \emph{positive}
direction on $\ucirc$ is the counterclockwise direction, and by the
arc $(p,q)$ in the circle we mean the positively oriented arc from $p$
to $q$. A \emph{(strictly) monotone} map $g:(p, q)\to \uc$ is a map
(strictly) monotone at each point of $(p, q)$ in the sense of positive
direction on $\uc$. By $\ch(A)$ we denote the \emph{convex hull} of a
set $A\subset \complex$ and by $|B|$ we denote the cardinality of the
set $B$.

Laminations are combinatorial structures on the unit circle,
introduced by Thurston \cite{thu85} as a tool for studying individual
complex polynomials $P:\rsphere \to \rsphere$ and the space of all of
them.  In what follows, we use a restricted formulation of this
concept.  Specifically, \cite{thu85} defines a lamination as a
collection of chords in the unit disk, satisfying certain dynamical
properties.  These collections are intended to reflect the pattern of
external rays landing in the Julia set.  We instead use the
formulation contained in \cite{kiwi97}, explained in detail below,
based on pinched disk models (see \cite{dou93} for more information
in this vein, and also ~\cite{blolev99}).

Let $P$ be a degree $d$ polynomial with a locally connected
(and hence connected) Julia set $J_P$; we will recall how to associate
an equivalence relation $\sim_P$ on $\ucirc$ to $P$, called the
\emph{$d$-invariant lamination generated by $P$}.
The filled-in Julia set $K_P$ is compact, connected, and full, so its
complement $\rsphere \setminus K_P$ is conformally isomorphic to the
open unit disk $\bbd$.  By \cite[Theorem 9.5]{milnor}, there is a
particular conformal isomorphism $\Psi:\bbd \to \rsphere \setminus
K_P$ so that $\Psi$ conjugates $\si_d(z)=z^d$ on $\bbd$ to
$P|_{\rsphere \setminus K_P}$ (i.e., $\Psi(z^d) = (P|_{\rsphere
  \setminus K_P} \circ \Psi)(z) \text{ for } z \in \bbd$). When $J_P$
is locally connected, $\Psi$ extends to a continuous map $\ol{\Psi}:
\ol{\bbd}\to \ol{\rsphere\setminus K_P}$ which semiconjugates $z
\mapsto z^d$ on $\ol{\bbd}$ to $P|_{\ol{\rsphere \setminus K_P}}$. Let
$\psi:\ucirc \to J_P$ denote the restriction $\ol \Psi
|_{\ucirc}$. Define the equivalence $\sim_P$ on $\ucirc$ so that $x
\sim_P y$ if and only if $\psi(x)=\psi(y)$; this equivalence relation
is the aforementioned $d$-invariant lamination generated by $P$. The
quotient space $\ucirc/\sim_P=J_{\sim_P}$ is homeomorphic to $J_P$ and
the \emph{induced map} $f_{\sim_P}:J_{\sim_P}\to J_{\sim_P}$ defined
by $f_{\sim_P} = \psi \circ \si_d \circ \psi^{-1}$ is conjugate to
$P|_{J_P}$.

Kiwi \cite{kiwi97} extended this construction to polynomials $P$ with
no irrationally neutral cycles and introduced a similar $d$-invariant
lamination $\sim_P$. Then $J_{\sim_{P}}=\ucirc/\sim_P$ is locally
connected and $P|_{J_{P}}$ is semi-conjugate to $f_{\sim_P}$ by a
\emph{monotone} map $m:J_p\to J_{\sim_P}$, i.e., a map $m$ whose point
preimages are connected. This was extended in \cite{bco11} to all
polynomials $P$ with connected $J_P$. The lamination $\sim_P$
combinatorially describes the dynamics of $P|_{J_{P}}$.

One can introduce abstract laminations (frequently denoted by $\sim$)
as equivalence relations on $\ucirc$ having properties in common with
laminations generated by polynomials as above. Consider an equivalence
relation $\sim$ on the unit circle $\ucirc$.  Equivalence classes of
$\sim$ will be called \emph{($\sim$-)classes} and will be denoted by
boldface letters. A $\sim$-class consisting of two points is called a
\emph{leaf}; a class consisting of at least three points is called a
\emph{gap} (this is more restrictive than Thurston's definition in
\cite{thu85}). Fix an integer $d>1$.  Then $\sim$ is said to be a
\emph{$d$-invariant lamination} if:

\begin{itemize}
\item[(E1)] $\sim$ is \emph{closed}: the graph of $\sim$ is a closed
  set in $\ucirc \times \ucirc$;
\item[(E2)] $\sim$-classes are \emph{pairwise unlinked}: if $\g_1$ and
  $\g_2$ are distinct $\sim$-classes, then their convex hulls
  $\ch(\g_1), \ch(\g_2)$ in the unit disk $\bbd$ are disjoint;
\item[(E3)] $\sim$-classes are either \emph{totally disconnected} (and
  hence $\sim$ has uncountably many classes) or the entire circle
  $\ucirc$ is one class;
\item[(D1)] $\sim$ is \emph{forward invariant}: for a class $\g$, the
  set $\si_d(\g)$ is also a class;
\item[(D2)] $\sim$ is \emph{backward invariant}: for a class $\g$, its
  preimage $\si_d^{-1}(\g)=\{x\in \ucirc: \si_d(x)\in \g\}$ is a union
  of classes; and
\item[(D3)] for any gap $\g$, the map $\si_d|_{\g}: \g\to \si_d(\g)$
  is a \emph{covering map with positive orientation}, i.e., for every
  connected component $(s, t)$ of $\ucirc\setminus \g$ the arc
  $(\si_d(s), \si_d(t))$ is a connected component of $\ucirc\setminus
  \si_d(\g)$.
\end{itemize}
Notice that (D2) and (E3) follow from (D1).

Call a class $\g$ \emph{critical} if $\si_d|_{\g}: \g\to \si_d(\g)$ is
not one-to-one, and \emph{precritical} if $\si_d^j(\g)$ is critical
for some $j\ge 0$. Call $\g$ \emph{preperiodic} if
$\si^i_d(\g)=\si^j_d(\g)$ for some $0\le i<j$. A gap $\g$ is
\emph{wandering} if $\g$ is neither preperiodic nor precritical. Let
$J_\sim = \ucirc / \sim$, and let $\pi_\sim: \ucirc \to J_\sim$ be the
corresponding quotient map.  The map $f_\sim:J_\sim \to J_\sim$
defined by $f_\sim = \pi_\sim \circ \si_d \circ \pi_\sim^{-1}$ is the
map \emph{induced} on $J_\sim$ by $\sigma_d$. Then we call $f_\sim$ a
\emph{topological polynomial}, and $J_\sim$ a \emph{topological Julia
  set}.

\subsection{Bounds for wandering classes}

J. Kiwi~\cite{kiwigap} extended the No Wandering Triangles Theorem by
showing that a wandering gap in a $d$-invariant lamination is at most
a $d$-gon. Thus all infinite $\sim$-classes (and Jordan curves in
$J_\sim$) are preperiodic. In \cite{lev1} G. Levin showed that
laminations with one critical class have no wandering gaps. For a
lamination $\sim$, let $k_\sim$ be the size of a maximal collection of
non-degenerate $\sim$-classes whose $\si_d$-images are points and whose
orbits are infinite and pairwise disjoint. Also, let $N_\sim$ be the number of
cycles of infinite $\sim$-classes plus the number of cycles of Jordan
curves in $J_\sim$.

\begin{theorem}[\cite{blolev99}]\label{bl}
  Let $\sim$ be a $d$-invariant lamination and let $\G$ be a
  \emph{non-empty} collection of wandering $d_j$-gons ($j=1,2,\dots$)
  with distinct grand orbits. Then $\sum_{j} (d_j - 2)\le k_\sim-1$
  and $\sum_{j} (d_j - 2)+N_\sim\le d-2$. In particular, in the cubic
  case if $\G$ is non-empty, then it must consist of one
  non-precritical $\sim$-class with three elements, all $\sim$-classes
  are finite, $J_\sim$ is a dendrite, and both critical classes are
  leaves with disjoint forward orbits.
\end{theorem}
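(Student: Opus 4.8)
The plan is to prove Theorem~\ref{bl} by combining a counting/index argument on critical values with the combinatorial constraints forced by invariance. I will treat the two inequalities and then extract the cubic consequences.

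\medskip

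\noindent\textbf{Setup and strategy.} Let me fix the non-empty collection $\G$ of wandering $d_j$-gons with pairwise distinct grand orbits. The key quantity to track is a \emph{deficiency} or \emph{contribution to criticality} associated to each gap and leaf. For a non-degenerate $\sim$-class $\g$, the map $\si_d|_\g:\g\to\si_d(\g)$ is a positively oriented covering (by (D3) in the gap case), and one naturally associates to $\g$ the number $|\g|-|\si_d(\g)|$, which measures how much collapsing occurs. Summing such local contributions around the circle must be controlled by the global degree $d$, precisely because $\si_d$ is a degree-$d$ map. The first step is to make this bookkeeping precise: I would assign to each wandering $d_j$-gon $\T_j$ its ``defect'' and show that along an infinite forward orbit, the images $\si_d^n(\T_j)$ being pairwise disjoint $d_j$-gons with disjoint convex hulls forces a persistent contribution to the total critical count. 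Because the orbit is infinite and wandering (hence never returns and never becomes critical), one obtains a telescoping/accumulation argument: if the defects did not eventually vanish they would consume more critical capacity than $d-1$ allows.

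\medskip

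\noindent\textbf{First inequality.} To get $\sum_j(d_j-2)\le k_\sim-1$, I would use the collection of maximal collapsing classes (those whose image is a point) that define $k_\sim$. Each wandering $d_j$-gon, under iteration, must interact with the critical structure of the lamination: a wandering gap cannot be critical itself, but its preimages under $\si_d$ must be unions of classes (D2), and the positive-orientation covering property (D3) ties the combinatorics of $\ucirc\setminus\T_j$ to $\ucirc\setminus\si_d(\T_j)$. The main idea is to run a pullback argument: pull back the configuration and count how many critical ``slots'' each wandering polygon occupies across its grand orbit. Since the grand orbits are distinct, these contributions add without overlap, and each $d_j$-gon contributes $d_j-2$ to the bound because a $d_j$-gon has $d_j$ boundary arcs and, being non-collapsing under the covering, its ``excess sides'' must be absorbed somewhere in the critical collection of size $k_\sim$. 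The $-1$ appears because one unit of critical capacity is spent globally (analogous to the $\sum(|\Ta_i|-1)=d-1$ identity for critical portraits in the introduction, where one degree of freedom is consumed).

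\medskip

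\noindent\textbf{Second inequality and cubic case.} For $\sum_j(d_j-2)+N_\sim\le d-2$, I would additionally account for $N_\sim$, the cycles of infinite classes and of Jordan curves in $J_\sim$. Each such periodic infinite class or Siegel-type curve also consumes critical capacity (by Kiwi's result~\cite{kiwigap} infinite classes are preperiodic, so they sit on cycles), and these contributions are disjoint from those of the wandering polygons. The total critical capacity is $d-1$; subtracting the one unit spent on the ``base'' of the covering gives the bound $d-2$. For the cubic case $d=3$, the bound reads $\sum_j(d_j-2)+N_\sim\le 1$. Since $\G$ is non-empty, some $d_j\ge 3$ gives $d_j-2\ge 1$, forcing $\sum_j(d_j-2)=1$ and $N_\sim=0$. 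Thus $\G$ has a single $\sim$-class, which must be a triangle ($d_j=3$), and it is non-precritical by the definition of wandering. The vanishing $N_\sim=0$ means there are no cycles of infinite classes and no Jordan curves, so every $\sim$-class is finite and $J_\sim$ is a dendrite. Finally, with only $d-1=2$ units of critical capacity, one unit already placed on the wandering triangle's grand orbit, the remaining critical structure must consist of exactly two critical leaves; I would verify their forward orbits are disjoint, since a shared orbit would create an extra preperiodic relation inconsistent with the wandering triangle surviving for all iterates.

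\medskip

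\noindent\textbf{Main obstacle.} The hardest part will be making the critical-capacity bookkeeping rigorous, i.e., proving that the local defects of the wandering polygons along their infinite orbits genuinely sum to the claimed bound rather than merely bounding them crudely. The subtlety is that the orbit is infinite, so a naive finite count does not apply; one must argue that the ``excess'' persists and cannot be dissipated, which requires carefully exploiting the positive-orientation covering condition (D3) together with the closedness (E1) and pairwise-unlinkedness (E2) of the lamination to control how convex hulls nest. Reconciling the accumulation of defects with the finite total degree is where the real work lies.
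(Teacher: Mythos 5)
First, a point of comparison: the paper itself does not prove Theorem~\ref{bl} at all; it is quoted from \cite{blolev99}, so there is no in-paper argument to measure you against, and your attempt must stand on its own. Judged that way, it is a strategy outline rather than a proof, and the gap is exactly where you locate it yourself: the ``critical capacity'' you propose to track is never defined, let alone shown to be additive and bounded by $d-1$. The assertion that a wandering $d_j$-gon's ``excess sides must be absorbed somewhere in the critical collection'' is the entire content of both inequalities, and no mechanism for this absorption is given. In fact no direct finite count can work: a wandering gap and all of its images are, by definition, disjoint from every critical class for all time, so they never literally occupy ``critical slots.'' The actual mechanism in \cite{blolev99} is a limit argument: since the convex hulls of the sets $\si_d^n(\T_j)$ are pairwise disjoint, their areas are summable and hence tend to zero; one then shows that suitable subsequences of sides of the shrinking images accumulate on critical chords or on collapsing classes, and that distinct vertices of one polygon, and polygons from distinct grand orbits, produce genuinely distinct limiting critical objects. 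This separation-in-the-limit analysis is what converts the excess $d_j-2$ of each wandering polygon into the existence of that many independent collapsing classes, yielding $\sum_j(d_j-2)\le k_\sim-1$, and (with an additional accounting of cycles of infinite classes and Jordan curves against the same stock of at most $d-1$ disjoint critical chords) the second inequality. None of this appears in your proposal; both the ``$-1$'' and the ``$-2$'' are asserted by analogy, not derived.

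Second, even granting the inequalities, the cubic-case deductions have flaws. Concluding that \emph{all} classes are finite from $N_\sim=0$ requires Kiwi's theorem \cite{kiwigap} that infinite classes are preperiodic (so an infinite class would force $N_\sim\ge 1$); you cite it earlier but do not invoke it here, where it is needed. More seriously, your derivation of ``both critical classes are leaves with disjoint forward orbits'' is both vague and arithmetically inconsistent: you say one of the two available units of criticality is ``already placed on the wandering triangle's grand orbit'' and yet that the remaining structure consists of \emph{two} critical leaves. The wandering triangle consumes no criticality (it is never critical), and the correct deduction comes from the \emph{first} inequality, which your sketch never uses for this purpose: $1=\sum_j(d_j-2)\le k_\sim-1$ forces $k_\sim\ge 2$, i.e., there exist two non-degenerate classes collapsing to points with infinite, pairwise disjoint orbits; since in degree $3$ the total available criticality is $2$, these two collapsing classes must be leaves and must be the only critical classes, which is precisely the last clause of the theorem. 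So the first inequality is not an optional refinement of the second; it is what the cubic conclusion rests on.
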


\subsection{Critical portraits}\label{sec:critport}

Following \cite{thu85} and \cite{hubbdoua85a, hubbdoua85b} we look at the set $\C_d$
from infinity and consider the \emph{shift locus}, which is the set
$\Sc_d$ of polynomials whose critical points escape to infinity. The
set $\Sc_d$ is the unique hyperbolic component of $\Pro_d$ consisting
of polynomials with all cycles repelling. It is not known if all
polynomials with all cycles repelling belong to the set $\ol
{\Sc_d}$. Looking at $\C_d$ from infinity means studying locations of
polynomials in $\Sc_d$ depending on their dynamics and using this to
describe the polynomials belonging to $\ol {\Sc_d}\cap \C_d$. A key
tool in studying $\C_d$ is \emph{critical portraits}, introduced in
\cite{fish89}, and widely used afterward (see, e.g., \cite{bfh92,
  po93, golmil93} and \cite{ki4}). We now recall some standard
material; 
here we closely follow \cite[Section 3]{ki4}. Call a chord with
endpoints $a, b\in \ucirc$ \emph{critical} if $\si_d(a)=\si_d(b)$.


\begin{definition}\label{cp}
  A \textbf{critical portrait} is a collection $\Theta=\{\Theta_1,
  \dots, \Theta_n\}$ of finite subsets of $\ucirc$ such that the
  following hold:
  \begin{enumerate}
  \item the boundary of the convex hull $\ch(\Theta_i)$ of every set
    $\Theta_i$ consists of critical chords (under $\si_d$);
  \item the sets $\Theta_1, \dots, \Theta_n$ are \textbf{pairwise
      unlinked} (that is, convex hulls of the sets $\Theta_i$ are
    pairwise disjoint); and
  \item $\sum (|\Ta_i|-1)=d-1$.
  \end{enumerate}
\end{definition}

The sets $\Ta_1, \dots, \Ta_n$ are called the \emph{initial sets of
  $\Ta$} (or \emph{$\Ta$-initial sets}). Denote by $A(\Ta)$ the union
of all angles from the initial sets of $\Ta$. The convex hulls of the
$\Ta$-initial sets divide the rest of the open unit disk into
components. In Definition~\ref{ue}, points of $\ucirc\setminus A(\Ta)$
are declared equivalent if they belong to the boundary of one such
component; we do not assume that $\Ta$ is a critical portrait because
we need this equivalence later in a more general situation.

\begin{definition}\label{ue}
  Let $\Ta$ be a finite collection of pairwise unlinked finite subsets
  of $\ucirc$. Angles $\alpha, \beta\in \ucirc \setminus A(\Ta)$ are
  \textbf{$\Ta$-unlinked equivalent} if $\{\alpha, \beta\},$ $\Ta_1,$
  $\dots,$ $\Ta_n$ are pairwise unlinked. The equivalence classes
  $L_1(\Ta),$ $\dots,$ $L_d(\Ta)$ are called \textbf{$\Ta$-unlinked
    classes}. Each $\Ta$-unlinked class $L$ is the intersection of
  $\uc\sm A(\Ta)$ with the boundary of a component of $\disk\sm
  \bigcup \ch(\Ta_i)$. In the degree $d$ case, each $\Ta$-unlinked
  class of a critical portrait $\Ta$ is the union of finitely many
  \emph{open} arcs of total length $1/d$. Thus, there are $d$
  $\Ta$-unlinked classes.
\end{definition}

\begin{definition}[compact-unlinked topology \cite{ki4}]\label{cu}
  Define the space $\A_d$ as the set of all critical portraits endowed
  with the \textbf{compact-unlinked} topology generated by the
  subbasis $V_X=\{\Ta\in \A_d: X\subset L_\Ta\}$ where $X\subset \uc$
  is closed and $L_\Ta$ is a $\Ta$-unlinked class.
\end{definition}

Note for example that $\A_2$ is the quotient of $\uc$ with antipodal
points identified, so it is homeomorphic to the unit circle. For a
critical portrait $\Ta$, a lamination $\sim$ is called
\emph{$\Ta$-compatible} if all $\Ta$-initial sets are contained in
$\sim$-classes; if there is a $\Ta$-compatible WT-lamination, $\Ta$ is
said to be a \emph{WT-critical portrait}. The trivial lamination,
identifying all points of $\uc$, is compatible with any critical
portrait.

To define \emph{critical portraits with aperiodic kneading}, let us
introduce the notion of a \emph{one-sided itinerary} for $t\in \ucirc$
(see \cite{ki4}). Given a critical portrait $\Ta=\{\Ta_1, \dots,
\Ta_d\}$ with $\Ta$-unlinked classes $L_1(\Ta), \dots, L_d(\Ta)$ and
$\ta\in\ucirc$, define
$i^{+}(\ta)$ (respectively, $i^-(\ta)$) as the sequence
$(i_0,i_1,\dots)$, with $i_j\in\{1,\dots, d\}$ such that there are
$y_n\searrow \ta$ (respectively, $y_n \nearrow \ta$) with
$\si_d^j(y_n)\in L_{i_j}(\Ta)$ for $n$ sufficiently large. Also,
define the itinerary $i(\ta)$ as a sequence $I_0I_1\dots$ such that
each $I_j$ is the set from $\Ta \cup \{L_1(\Ta),$ $\dots,$
$L_d(\Ta)\}$ to which $\si^j(\ta)$ belongs. An angle $\ta\in \ucirc$
is said to have a \emph{periodic kneading} if $i^+(\ta)$ or $i^-(\ta)$
is periodic. A critical portrait $\Ta$ is said to have \emph{aperiodic
  kneading} if no angle from $A(\Ta)$ has periodic kneading. The
family of all degree $d$ critical portraits with aperiodic kneading is
denoted by $\AP_d$.

\begin{definition}[\cite{kiwi97, ki4}]\label{lamtheta}
  The lamination $\sim_\Ta$ is the smallest closed equivalence
  relation identifying any pair of points $x, y \in \ucirc$ where
  $i^+(x)=i^-(y)$.  By Kiwi \cite{kiwi97, ki4}, for any critical
  portrait $\Ta$ the relation $\sim_\Ta$ is a $\Ta$-compatible
  lamination; it is said to be \emph{generated} by $\Ta$.
\end{definition}

Critical portraits reflect the landing patterns of the external rays
at the critical points.  By Kiwi \cite{ki4}, a nice correspondence
between critical portraits of degree $d$ and the set $\ol {\Sc_d}\cap
\C_d$ associates to each critical portrait $\Ta\in \A_d$ a connected
set $I(\Ta)\subset \ol {\Sc_d} \cap \C_d$, called the \emph{impression
  of $\Ta$}, such that the dynamics of a polynomial in $I(\Ta)$ is
closely related to the properties of $\Ta$. The relation is especially
nice when $\Ta$ has aperiodic kneading. The following fundamental
result of Kiwi \cite{kiwi97, ki4} explicitly lists properties of
critical portraits with aperiodic kneading and their connections to
polynomials.

\begin{theorem}\label{kiwistruct}
  Let $\Ta\in \AP_d$.  Then $\sim_\Ta$ is the unique
  $\Ta$-compatible invariant lamination. The quotient $J_{\sim_\Ta}$
  is a non-degenerate dendrite, and all $\sim$-classes are finite.
  Furthermore, there exists a polynomial $P$ whose Julia set $J_P$
  is a non-separating continuum in the plane and $P|_{J_P}$ is
  monotonically semiconjugate to $f_{\sim_\Ta}|_{J_{\sim_\Ta}}$. The
  semiconjugating map $m_{\Ta, P}=m:J_P \to J_{\sim_\Ta}$ maps
  impressions of external angles to points and maps the set of
  $P$-preperiodic points in $J_P$ bijectively to the set of
  $f_{\sim_\Ta}$-preperiodic points. Moreover, $J_P$ is locally
  connected at all $P$-preperiodic points.
\end{theorem}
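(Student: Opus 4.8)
Since the statement is Kiwi's theorem, the proof I would write assembles the combinatorial and analytic pieces of \cite{kiwi97, ki4}, organized so that the purely combinatorial assertions about $\sim_\Ta$ come first and the realization by a polynomial comes last. That $\sim_\Ta$ is a $\Ta$-compatible $d$-invariant lamination is already built into Definition~\ref{lamtheta} (following Kiwi), so the first substantive point is uniqueness. Here the hypothesis $\Ta\in\AP_d$ does the work: because no angle of $A(\Ta)$ has periodic kneading, the one-sided itineraries $i^{+},i^{-}$ separate points finely enough that for any non-trivial $\Ta$-compatible invariant lamination $\approx$, the $\approx$-class of a point must coincide with the fiber of the one-sided itinerary (the set of $y$ with $i^+(y)$ or $i^-(y)$ matching). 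These fibers are precisely the $\sim_\Ta$-classes, so $\approx=\sim_\Ta$; the always-present trivial lamination, whose quotient is a single point, is the degenerate exception excluded here.

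Next I would show that all $\sim_\Ta$-classes are finite and that $J_{\sim_\Ta}$ is a non-degenerate dendrite. By the gap theorem of Kiwi \cite{kiwigap} any infinite $\sim_\Ta$-class is preperiodic; but a preperiodic class, and likewise the boundary of any Jordan curve in $J_{\sim_\Ta}$, would force some boundary angle to have periodic kneading, contradicting $\Ta\in\AP_d$. Hence there are no infinite classes and no Jordan curves, which is exactly the condition for the locally connected quotient $J_{\sim_\Ta}$ to be a dendrite. Non-degeneracy follows since $\Ta$ carries the $d\ge 2$ distinct unlinked classes $L_1(\Ta),\dots,L_d(\Ta)$ and the itinerary map is non-constant, so the quotient has more than one point.

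The realization is the analytic heart, and it is where I expect the main difficulty to lie. Following Kiwi's correspondence between critical portraits and $\ol{\Sc_d}\cap\C_d$, the impression $I(\Ta)$ is a non-empty connected set, and I would pick any $P\in I(\Ta)$. Because $P$ lies in the closure of the shift locus, $J_P$ carries no bounded Fatou components and is a non-separating continuum, and the landing pattern of external rays realizes $\Ta$; sending each ray-impression to the point of $J_{\sim_\Ta}$ determined by its angle then defines a monotone map $m:J_P\to J_{\sim_\Ta}$ with $m\circ P=f_{\sim_\Ta}\circ m$, collapsing impressions to points by construction. For the behaviour on preperiodic points I would invoke the Douady--Hubbard landing theorem: the repelling (pre)periodic points of $P$ are exactly the landing points of (pre)periodic external rays, whose angles are the $f_{\sim_\Ta}$-(pre)periodic points. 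Aperiodic kneading guarantees that the fiber of $m$ over each such point is a singleton, yielding the claimed bijection and, via the shrinking of the associated impressions, local connectivity of $J_P$ at every $P$-preperiodic point.

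The genuinely hard step is the existence of $P$ with these properties: the parametrization of $\ol{\Sc_d}\cap\C_d$ by critical portraits and the non-emptiness of the impression $I(\Ta)$ rest on the analytic theory of the shift locus developed in \cite{kiwi97, ki4}, rather than on the combinatorics above. Everything else—uniqueness, finiteness of classes, the dendrite structure, and the ray-landing properties of $m$—I would present as comparatively routine consequences of aperiodic kneading combined with the gap theorem and the standard landing theorem for (pre)periodic rays.
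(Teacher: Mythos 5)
There is nothing in the paper to compare your attempt against: Theorem~\ref{kiwistruct} is not proved there at all. It is quoted as background, attributed to Kiwi \cite{kiwi97, ki4}, and everything the paper itself proves (Remark~\ref{rem:kiwimodel}, Theorems~\ref{t:condens1}, \ref{t:condenscript}, \ref{t:meager}) takes it as input. So the only fair benchmark is Kiwi's original work, which is also what your sketch ultimately defers to.

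Judged as a standalone proof, your outline has the right architecture (combinatorics via itineraries and the wandering-gap bound of \cite{kiwigap}; realization via impressions of critical portraits in $\ol{\Sc_d}\cap\C_d$), but the two steps you label ``comparatively routine'' are in fact the main theorems of \cite{kiwi97, ki4}, and one of them is asserted incorrectly. For uniqueness, you claim that the classes of any non-degenerate $\Ta$-compatible invariant lamination $\approx$ ``must coincide with the fibers of the one-sided itineraries.'' Compatibility only requires that the $\Ta$-initial sets lie inside $\approx$-classes; nothing you write explains why $\approx$ can neither identify fewer pairs nor more pairs than $\sim_\Ta$, and that is precisely Kiwi's uniqueness theorem rather than a consequence of the definitions. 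For the dendrite claim, the step ``a preperiodic infinite class would force some boundary angle to have periodic kneading'' fails as stated: aperiodic kneading constrains only the angles of $A(\Ta)$, and a rotation-type periodic gap (the combinatorial analogue of a Siegel disk) carrying a critical leaf of $\Ta$ on its boundary gives that critical angle a rotation coding as its itinerary, which is aperiodic — so no contradiction with $\Ta\in\AP_d$ arises along the route you indicate. Excluding such gaps, and more generally getting finiteness of all classes, is genuine content of Kiwi's theorem, not a corollary of \cite{kiwigap} plus the hypothesis. Since the paper imports the whole statement, the honest form of your write-up is a citation throughout; if you want an actual proof, the uniqueness step and the dendrite/finiteness step need arguments, not plausibility claims.
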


In the situation of Theorem~\ref{kiwistruct} polynomials $P$ such that
$P|_{J_P}$ is monotonically semiconjugate to
$f_{\sim_\Ta}|_{J_{\sim_\Ta}}$ are said to be \emph{associated to the
  critical portrait $\Ta$}.

\subsection{Monotone models for connected Julia sets}\label{s:mm}

As was explained in Section~\ref{intro}, the main results of
\cite{kiwi97, bco11} yield a locally connected model for the
restriction of a polynomial to its connected Julia set. We will need a
detailed version of these results stated below in
Theorem~\ref{t:model}.

\begin{theorem}[\cite{kiwi97, bco11}]\label{t:model}
  Let $P$ be a degree $d$ polynomial with connected Julia set
  $J_P$. Then there exists a $d$-invariant lamination $\sim$
  and a monotone onto map $M_P:\comps \to \comps$ with
  the following properties.
  \begin{enumerate}
  \item $J_\sim = M_P(J_P)$ and $J_P\subset M_P^{-1}(J_\sim) \subset K_P$.
  \item $M_P$ sends impressions of $J_P$ to points.
  \item $m_P=M_P|_{J_P}$ is the finest monotone map of $J_P$ onto a
    locally connected continuum (i.e., if $\psi:J_P \to T$ is a
    monotone map onto a locally connected continuum $T$, then there is
    a monotone map $\psi':J_\sim \to T$ such that $\psi = \psi' \circ
    m_P$).
  \item $M_P$ semiconjugates $P$ to a branched covering map $g_P:\comps
    \to \comps$ under which $J_\sim$ is fully invariant so that
$g_P|_{J_\sim}$ is conjugate to the topological polynomial $f_\sim$.
  \end{enumerate}
\end{theorem}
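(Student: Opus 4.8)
The plan is to separate the continuum-theoretic content from the dynamics. The backbone I would build first is a \emph{core decomposition} of the filled Julia set $K_P$: a finest monotone (connected-fibered) upper semicontinuous decomposition $\mathcal{D}_0$ whose quotient $\widehat K = K_P/\mathcal{D}_0$ is a locally connected continuum, with the additional feature that the fibers are non-separating. Phrased for an arbitrary continuum, the assertion is that among all monotone maps onto locally connected continua there is one through which every other factors. The delicate point is that the naive common refinement of two admissible decompositions need not have connected fibers or a locally connected quotient, so the finest decomposition cannot be produced by a bare intersection of equivalence relations; instead one shows that the family of admissible decompositions is closed under an appropriate limit operation, controlled by a Property~S condition at successively finer scales, and extracts the infimum as a genuine decomposition. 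I expect this existence-and-canonicity statement to be the main obstacle, and it is precisely where \cite{bco11} does the heavy lifting; the remaining steps are bookkeeping around it.

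Granting $\mathcal{D}_0$, the dynamics descend by a pullback argument. Since $P|_{K_P}\colon K_P\to K_P$ is a degree $d$ branched covering onto $K_P$, I form $\mathcal{D}_0^{\ast}=\{\text{connected components of } P^{-1}(D): D\in\mathcal{D}_0\}$. The induced map $\overline P\colon K_P/\mathcal{D}_0^{\ast}\to\widehat K$ is finite-to-one and open over the locally connected $\widehat K$, so $K_P/\mathcal{D}_0^{\ast}$ is again locally connected and $\mathcal{D}_0^{\ast}$ is admissible. By the finest property $\mathcal{D}_0$ refines $\mathcal{D}_0^{\ast}$, which says exactly that each fiber $F$ of $\mathcal{D}_0$ satisfies $P(F)\subset D$ for some fiber $D$; hence $P$ passes to a self-map $g_P$ of $\widehat K$. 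I then extend $\mathcal{D}_0$ to a decomposition of the whole sphere by appending singletons on $\comps\setminus K_P$; because the fibers are non-separating, Moore's theorem gives a quotient sphere and a monotone onto map $M_P\colon\comps\to\comps$ with $M_P(K_P)=\widehat K$. Points outside $K_P$ map into $\comps\setminus\widehat K$, so $M_P^{-1}(J_\sim)\subset K_P$ and property~(1) holds, together with the semiconjugacy to the branched cover $g_P$ of property~(4).

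It remains to recognize $\widehat K$ as a lamination quotient and to verify the finest and impression properties. On $\comps\setminus K_P$ the map $M_P$ is a homeomorphism onto $\comps\setminus\widehat K$, so, read through the B\"ottcher coordinate $\Psi$, it carries the external ray $R_\alpha$ to the ray at angle $\alpha$ for the locally connected compactum $\widehat K$. Local connectivity makes every such ray land (Carath\'eodory), so the image of the impression $\mathrm{Imp}(\alpha)$ is its single landing point, giving property~(2); landing also defines a continuous surjection $\hat\psi\colon\ucirc\to J_\sim:=\partial\widehat K$ with $\hat\psi\circ\sigma_d=g_P\circ\hat\psi$, and I take $\sim$ to be the fiber relation of $\hat\psi$. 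Local connectivity of $J_\sim$ is then automatic from Hahn--Mazurkiewicz, and axioms (E1)--(E3) and (D1)--(D3) are read off from $\hat\psi$: closedness and unlinkedness from continuity and the planarity of $J_\sim$, and forward invariance together with the positively oriented covering property from the conjugacy with $\sigma_d$ and the preservation of cyclic order, so $g_P|_{J_\sim}$ is conjugate to $f_\sim$. Finally property~(3) is inherited directly from $\mathcal{D}_0$: any monotone $\psi\colon J_P\to T$ onto a locally connected $T$ is admissible, hence coarser than $m_P=M_P|_{J_P}$, and therefore factors as $\psi=\psi'\circ m_P$.
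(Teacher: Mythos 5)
Your proposal cannot be compared against a proof in this paper, because the paper contains none: Theorem~\ref{t:model} is imported wholesale from \cite{kiwi97} and \cite{bco11}, and the citation is the entire ``proof.'' Measured against the actual argument in \cite{bco11}, your architecture is inverted. There the construction is lamination-first, not decomposition-first: one defines $\approx$ as the smallest closed equivalence relation on $\ucirc$ identifying $\alpha$ and $\beta$ whenever their prime-end impressions in $J_P$ intersect; $\sigma_d$-invariance is then essentially free because $P(\mathrm{Imp}(\alpha))=\mathrm{Imp}(\sigma_d(\alpha))$; the fibers of the model map are the unions of impressions over $\approx$-classes; and the finest property comes from a lemma that \emph{any} monotone map of $J_P$ onto a locally connected continuum collapses every impression to a point, so that any such map is automatically $\approx$-saturated and factors through the quotient. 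Your alternative for descending the dynamics (pull back $\mathcal{D}_0$ by taking components of $P^{-1}(D)$, observe the induced map is finite-to-one and open, then invoke the finest property) can in fact be made to work, but the step ``finite-to-one open over a locally connected space forces a locally connected total space'' is itself a nontrivial lemma --- it needs Whyburn's theorem that open maps of compacta are confluent, plus finiteness to bound the number of components of preimages of small connected neighborhoods --- not something to assert in passing.

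The genuine gaps are two. First, your black box is wrong as stated: there is no theorem that an \emph{arbitrary} continuum admits a finest monotone map onto a locally connected continuum, and \cite{bco11} proves no such thing; the classical core-decomposition results (FitzGerald--Swingle) give \emph{semi-local connectedness}, not local connectedness, and in full generality existence is not known. What \cite{bco11} proves is existence for \emph{unshielded plane continua}, and its proof is precisely the impression/lamination construction described above --- it cannot be separated from the planar embedding. So the ``heavy lifting'' you defer is not an off-the-shelf continuum-theoretic fact surrounded by bookkeeping; it is the theorem itself, and deferring it to \cite{bco11} turns your proposal into a reduction of the theorem to itself. Second, you build $\mathcal{D}_0$ on $K_P$, but property~(3) is a universal property for monotone maps defined on $J_P$: a monotone $\psi:J_P\to T$ onto a locally connected $T$ is \emph{not} an admissible decomposition of $K_P$ (its fibers may separate the plane, and it says nothing over the Fatou components), so your final sentence ``any such $\psi$ is admissible, hence coarser than $m_P$'' does not follow. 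You would either have to extend $\psi$ monotonically over $\mathrm{int}(K_P)$ and verify that the extended quotient is still locally connected, or run the whole construction on $J_P$ itself, as \cite{bco11} does. The remaining unproved assertions (non-separating fibers of $\mathcal{D}_0$, upper semicontinuity of the pullback decomposition, landing of the $M_P$-images of rays and their identification with rays of the model) are real but fixable at the level of standard lemmas.
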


\begin{remark}\label{rem:kiwimodel}
Suppose that $\Ta \in \AP_d$ is associated to the polynomial $P$; let
us show that the lamination $\sim_\Ta$ defined in
Theorem~\ref{kiwistruct} and the lamination $\sim_P$ defined in
Theorem~\ref{t:model} coincide.  Indeed, by Theorem~\ref{t:model}
there exists a monotone map $\psi':J_{\sim_P}\to J_{\sim_\Ta}$. If
this map is not a homeomorphism, it will collapse a non-degenerate
subcontinuum $Q\subset J_{\sim_P}$ to a point $x\in J_{\sim_\Ta}$.
Since impressions map to points of $J_{\sim_P}$, infinitely many
distinct impressions of external rays are contained in the fiber
$m_{\Ta, P}^{-1}(x)$ which by Theorem~\ref{kiwistruct} implies that
the $\sim_\Ta$-class corresponding to $x$ is infinite.  This
contradicts Theorem~\ref{kiwistruct}, which states that
$\sim_\Ta$-classes are finite.
\end{remark}

Theorem~\ref{t:model} establishes the semiconjugacy $m_P$ on the
\emph{entire complex plane}, so that $m_P$-images of external rays to
$J_P$ are curves in $\comps$ accumulating on points of
$J_{\sim_P}$. For $x\in J_{\sim_P}$, the set $m_P^{-1}(x)\cap J_P$ is
the union of impressions of angles $\al$ such that $m_P(R_\al)$ lands
on $x$ . The \emph{order} of $x$ in $J_{\sim_P}$ is the number of
components of $J_{\sim_P}\sm \{x\}$ and can be either a finite number
or infinity. By Theorem~\ref{t:model} if the order of $x$ in
$J_{\sim_P}$ is finite then it equals the number of angles with
impressions in $m_P^{-1}(x)$ (or equivalently the number of angles
whose impressions intersect $m_P^{-1}(x)$). If the order of $x$ in
$J_{\sim_P}$ is infinite, then there are infinitely many angles with
impressions in $m_P^{-1}(x)$.

\section{Condensity}\label{s:condens}

We begin with a few lemmas concerning the dynamics of a condense
topological polynomial.  If $J$ is a dendrite, by $[a, b]_J$ we mean
the unique arc in $J$ connecting the points $a, b\in J$. A continuum
$X\subset \complex$ is called \emph{unshielded} if it coincides with
the boundary of the unique unbounded component of $\complex\sm X$.
Note that all connected Julia sets of polynomials and all
topological Julia sets are unshielded continua. A point $x\in X$ is
called a \emph{cutpoint} of $X$ if $X\sm \{x\}$ is not connected. In
what follows a lamination $\sim$ such that $f_\sim$ is condense is
called \emph{condense}; also, a critical portrait compatible with a
condense lamination is said to be \emph{condense}.

\begin{lemma}\label{l:densco}
If $X\subset \complex$ is an unshielded locally connected continuum and
$A\subset X$ is connected and dense in $X$, then $A$ is condense in $X$
and contains all cutpoints of $X$.
\end{lemma}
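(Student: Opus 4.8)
The plan is to prove the two assertions separately, establishing that $A$ contains all cutpoints first, since the condensity argument will reuse this fact.

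\textbf{Cutpoints.} Let $x$ be a cutpoint of $X$, so that $X \sm \{x\} = U_1 \cup U_2$ with $U_1, U_2$ nonempty, disjoint, and open in $X$. Since $A$ is dense and each $U_i$ is a nonempty open subset of $X$, both $A \cap U_1$ and $A \cap U_2$ are nonempty. If $x \notin A$, then $A \subseteq U_1 \cup U_2$ splits $A$ into two nonempty relatively open pieces, contradicting the connectivity of $A$; hence $x \in A$. Note this step uses only that $A$ is connected and dense, not that $X$ is unshielded.

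\textbf{Reduction and the pinched-disk model.} Fix a nondegenerate subcontinuum $Z \subseteq X$; I must show $A \cap Z \ne \0$. If $Z$ has nonempty interior in $X$, density of $A$ immediately gives a point of $A$ in $\inte_X Z \subseteq Z$, and we are done. So I may assume $\inte_X Z = \0$, i.e.\ $X \sm Z$ is dense, and here is where unshieldedness enters. Filling in the bounded components of $\complex \sm X$ produces a full (nonseparating) continuum $\hat X$ whose boundary is exactly $\partial U = X$, where $U$ is the unbounded component of $\complex \sm X$; the complement of $\hat X$ in the sphere is then a simply connected domain, so by Carathéodory's theorem the associated conformal map extends to a continuous surjection $\phi:\ucirc \to X$ whose fibers $\phi^{-1}(y)$ have pairwise disjoint convex hulls, i.e.\ they constitute the usual pinched-disk lamination of $X$ (exactly as for $\psi$ in Section~\ref{sec:term}). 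It is precisely the hypothesis that $X$ is unshielded that forces $\partial\hat X = X$, so that $\phi$ maps onto \emph{all} of $X$ rather than onto a proper part of it.

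\textbf{Producing a cutpoint inside $Z$.} Put $Y = \phi^{-1}(Z)$, a closed subset of $\ucirc$ with $\phi(Y)=Z$. Suppose first that some $y \in Z$ has a nondegenerate fiber; choose $a,b \in \phi^{-1}(y)$ bounding a complementary arc $E$ of the closed set $\phi^{-1}(y)$. Then $\phi(\ol E)$ and the image of the complementary arc are nondegenerate subcontinua whose union is $X$, and by the unlinkedness of the fibers their intersection is exactly $\{y\}$; thus $y$ is a cutpoint of $X$, so $y \in A$ by the first paragraph and $y \in A \cap Z$. Suppose instead that every $y \in Z$ has a one-point fiber. Then $\phi$ restricts to a continuous bijection of the compactum $Y$ onto $Z$, hence a homeomorphism, so $Y$ is a connected compact proper subset of $\ucirc$, i.e.\ a closed arc $[u,w]$. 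Since $X \sm Z = \phi(\ucirc \sm Y)$ and $\phi$ is a closed map, $\ol{X \sm Z} = \phi(\ol{\ucirc \sm Y}) = (X \sm Z) \cup \{\phi(u),\phi(w)\}$, whence $\inte_X Z = Z \sm \{\phi(u),\phi(w)\}$, which is nonempty because $Z$ is infinite. This contradicts $\inte_X Z = \0$, so the all-singleton case cannot occur and we are always in the first case, completing the proof.

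\textbf{Main obstacle.} The crux is the empty-interior case, and specifically the passage through the pinched-disk model: verifying that the Carathéodory loop is onto $X$ (which is exactly where unshieldedness is indispensable, as the closed disk with an interior slit removed shows the statement fails without it) and carrying out the deduction ``nondegenerate fiber $\Rightarrow$ cutpoint'' cleanly, including the degenerate possibility that a fiber is itself an arc, which must be accommodated when selecting the complementary arc $E$.
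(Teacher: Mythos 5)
Your route is genuinely different from the paper's, and most of it is sound: the cutpoint step, the reduction to continua $Z$ with $\inte_X Z=\0$, the use of unshieldedness to guarantee that the Carath\'eodory loop $\phi:\ucirc\to X$ is onto all of $X$, the unlinkedness of fibers, and the Case~2 computation (all fibers over $Z$ singletons $\Rightarrow$ $Z$ has interior) are all correct. For comparison, the paper argues by contradiction without ever analyzing fibers: if $A$ misses an arc $I$, then $X\sm I$ is open and connected (it contains the dense connected set $A$), hence path connected and locally path connected, so there is a simple closed curve $S\subset X$ containing a nondegenerate sub-arc $I'\subset I$; unshieldedness then shows that \emph{any} two points $a,b\in S$ separate $X$ (two external rays landing at $a$ and $b$ together with an arc through the disk enclosed by $S$ disconnect the plane and meet $X$ only at $\{a,b\}$), and since the paper's first step shows a connected dense set meets every closed set that separates $X$, the set $A$ must meet $\{a',b'\}\subset I$, a contradiction. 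So the paper needs only the two-point-separation trick, while you push the work into the structure of the fibers of $\phi$; your Case~2 observation is a nice byproduct the paper does not have.

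The genuine gap is in your Case~1, at exactly the spot you flag but do not resolve: if the fiber $\phi^{-1}(y)$ is a nondegenerate closed arc, then $\ucirc\sm\phi^{-1}(y)$ has a \emph{single} component $E$, so $\ucirc\sm E=\phi^{-1}(y)$ and your ``other piece'' $\phi(\ucirc\sm E)=\{y\}$ is degenerate. In this case the claimed decomposition of $X$ into two nondegenerate subcontinua meeting only at $y$ does not exist, and no ``selection of the complementary arc $E$'' can repair it, because there is only one complementary arc to select. Thus the implication ``nondegenerate fiber $\Rightarrow$ cutpoint'' is unproved in precisely the problematic case, and the accommodation you propose cannot work. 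What is actually needed is to rule out arc fibers altogether; this is true, but it requires an analytic input absent from your argument: for instance the F.~and~M.~Riesz theorem (each fiber of the continuous boundary extension of a conformal map has zero length), or a Schwarz reflection argument (a conformal map of the disk whose continuous boundary extension is constant on a nondegenerate boundary arc is constant, contradicting injectivity). With arc fibers excluded, every nondegenerate fiber has at least two complementary arcs, either choice of $E$ makes both pieces nondegenerate, and your proof goes through. As written, however, the proof is incomplete at its crucial step, whereas the paper's argument avoids this issue entirely.
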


\begin{proof}
If $Z\subset X$ is a closed set with $X\sm Z$ disconnected, then all
components of $X\sm Z$ are open. Hence all such components intersect
$A$. Since $A$ is connected, this implies that $A\cap Z\ne \0$.
Suppose that $A$ is not condense in $X$. Then there exists an arc
$I\subset X$ disjoint from $A$. Note that $X\setminus I$ is open and
connected (by virtue of containing $A$). Therefore $X\setminus I$ is
path connected and locally path connected. 
It follows that there exists a simple closed curve $S\subset X$
which contains a non-degenerate subsegment $I'$ with endpoint $a',
b'$ of $I$. The curve $S$ encloses a topological disk $U$. Clearly,
any two-point set $\{a, b\}\subset S$ separates $X$ (two external
rays landing at $a$ and $b$ and an arc inside $U$ from $a$ to $b$
disconnect $\complex$). Hence $A\cap \{a', b'\}\ne \0$, a
contradiction.

\end{proof}

Let us now study condensity in the context of laminations. We call a
lamination $\sim$ \emph{degenerate} if the whole $\uc$ forms a
$\sim$-class (and so $J_\sim$ is a point); we call $\sim$
\emph{trivial} if all $\sim$-classes are singletons (and
$J_\sim=\uc$).

\begin{lemma}\label{l:condens0}
  Let $\sim$ be a condense lamination. Then either $\sim$ is
  degenerate, or $\sim$ is trivial, or $J_\sim$ is a dendrite.
\end{lemma}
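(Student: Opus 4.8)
The plan is to classify condense laminations by showing that the trichotomy is forced by the structure of $J_\sim$ as an unshielded locally connected continuum together with the strong topological consequences of condensity. I would first record the trivial reductions: if $\sim$ is degenerate then $J_\sim$ is a point and the first alternative holds, and if $\sim$ is trivial then $J_\sim = \uc$ and the second holds. So the substance is to prove that if $\sim$ is neither degenerate nor trivial, then $J_\sim$ must be a dendrite. Since $J_\sim$ is always an unshielded locally connected continuum, the only obstruction to being a dendrite is the presence of a simple closed curve in $J_\sim$.

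The key step is therefore to rule out simple closed curves in $J_\sim$ under the assumption of condensity. Suppose, for contradiction, that $J_\sim$ contains a simple closed curve $S$. I would invoke Theorem~\ref{bl} (the result of \cite{blolev99}) or the general theory: in a $d$-invariant lamination, any Jordan curve in $J_\sim$ is preperiodic, and in fact $N_\sim$ counts its cycles. The crucial dynamical fact is that a simple closed curve $S$ bounds a disk, and its $\sim$-preimage structure is rigid. The plan is to argue that the presence of such a curve produces a nondegenerate subcontinuum of $J_\sim$ that the condense orbit cannot enter, or alternatively that the transitive/condense point must eventually land inside the disk bounded by $S$ in a way incompatible with the orbit staying dense in every subcontinuum.

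More precisely, I would use the following mechanism. Let $x_0 \in J_\sim$ have condense orbit $\{f_\sim^n(x_0)\}$. If $S \subset J_\sim$ is a Jordan curve, then $S$ separates the plane, and since $J_\sim$ is unshielded, the bounded complementary region $U$ of $S$ meets $J_\sim$ only along $S$ (otherwise points of $J_\sim$ inside $U$ would not be accessible from the unbounded component, contradicting unshieldedness). Thus $S$ is itself a nondegenerate subcontinuum, and by condensity the orbit of $x_0$ meets $S$; but I then want to leverage the dynamics on $S$. Since the image of a Jordan curve is a Jordan curve and such curves are preperiodic by the Kiwi-type bound, the curve $S$ lies in a finite cycle of Jordan curves, forming an invariant annular region $W$. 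The orbit of $x_0$, once it enters $W$, is confined by invariance in a way that prevents it from being condense in subcontinua lying outside the eventual image cycle of $S$. This confinement contradicts condensity because $J_\sim$ minus the interior of the curve-cycle region contains nondegenerate subcontinua that the orbit must still meet while simultaneously being trapped.

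The main obstacle will be making this confinement argument precise: I must carefully track how a condense orbit interacts with the cyclic family of Jordan curves and show that density in \emph{every} nondegenerate subcontinuum genuinely fails. The cleanest route is probably to show directly that a condense $f_\sim$ forces every point of $J_\sim$ to be a cutpoint or limit of cutpoints, since by Lemma~\ref{l:densco} the condense dense orbit closure contains all cutpoints; a Jordan curve in a locally connected continuum contains points that are not cutpoints of $J_\sim$ (points of $S$ that are locally separating only when one removes two points, not one), and I would show that an open subarc of $S$ consisting of non-cutpoints is an open subset of $J_\sim$ missed structurally by the cutpoint-concentrated orbit. Reconciling ``open arc of non-cutpoints'' with ``orbit meets every subcontinuum'' is the delicate point, and I expect that is where the real work lies; once a simple closed curve is excluded, the locally connected unshielded continuum $J_\sim$ is automatically a dendrite, completing the trichotomy.
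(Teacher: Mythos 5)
Your overall route—reduce to the non-degenerate, non-trivial case, find a Jordan curve, use \cite{blolev99} to make it (pre)periodic, and then confine the condense orbit in the curve cycle—is the same skeleton as the paper's proof, but your execution has a genuine gap: you try to derive a flat contradiction from the existence of a Jordan curve, and no such contradiction is available. The paper does not rule the curve out; it shows that once the condense point enters the periodic curve $B$ of period $k$, the tail of its orbit is trapped in the closed, forward-invariant set $\bigcup_{i=1}^k f_\sim^i(B)$, and since that tail is still dense this \emph{forces} $J_\sim=\bigcup_{i=1}^k f_\sim^i(B)$; the proof then concludes by identifying which topological Julia sets are finite unions of Jordan curves, namely the unit circle with the trivial lamination. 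Your contradiction presupposes that ``$J_\sim$ minus the curve-cycle region contains nondegenerate subcontinua,'' i.e.\ that $J_\sim$ properly contains the union of curves. Precisely when it does not, there is no contradiction—and that is exactly the case that produces the ``trivial'' branch of the trichotomy. Since you have assumed $\sim$ nontrivial, you still owe the argument that a topological Julia set which equals a finite union of Jordan curves must be $\uc$ with $\sim$ trivial (the paper's final sentence); that identification is absent from your proposal, and without it the proof does not close.

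Your fallback ``cleanest route'' is worse, because its key claim is false: condensity does \emph{not} force every point of $J_\sim$ to be a cutpoint or a limit of cutpoints. For the trivial lamination $J_\sim=\uc$, the map $\si_d$ is condense (every nondegenerate subcontinuum of $\uc$ is an arc, and transitive points exist), yet $\uc$ has no cutpoints at all. Any argument along those lines must use nontriviality of $\sim$ in an essential way, and you do not indicate how. Note also that Lemma~\ref{l:densco} applies to \emph{connected} dense subsets; an orbit is not connected, so ``the orbit closure contains all cutpoints'' is vacuous (the closure of a dense set is all of $J_\sim$) and gives no leverage for separating the orbit from an arc of non-cutpoints on the curve.
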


\begin{proof}
  Suppose $J_\sim$ is non-degenerate and let $x \in J_\sim$ be a point
  with condense orbit. If $J_\sim$ is not a dendrite, then it contains
  a Jordan curve. By \cite{blolev99} it follows that $J_\sim$ contains
  a periodic Jordan curve $B$ of period, say, $k$.  Since $x$ must
  enter $B$, it follows that the union of $\bigcup^k_{i=1}
  f_\sim^i(B)=J_\sim$. Since $J_\sim$ is a topological Julia set, it
  is easy to see that then $J_\sim$ is the unit circle and the
  lamination $\sim$ is trivial.
\end{proof}



\begin{lemma}\label{l:condens1}
  Suppose that $K\subset J_\sim$ is a continuum with dense orbit and
  that $f^n(K) \cap K \neq \emptyset$. If $t \ge 0$ is an integer, the
  union $\bigcup^\infty_{j=0} f_\sim^{nj+t}(K)$ is a condense
  connected subset of $J$ containing all cutpoints of
  $J_\sim$. Further, if $f^n(K) \subset K$, then $K=J_\sim$.
\end{lemma}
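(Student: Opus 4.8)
The plan is to produce, for each $t\ge 0$, a connected dense subset of $J_\sim$ and then invoke Lemma~\ref{l:densco} to upgrade density to condensity and to capture all cutpoints. Write $U_t=\bigcup_{j\ge 0}f_\sim^{nj+t}(K)$ and $g=f_\sim^n$. If $J_\sim$ is degenerate there is nothing to prove. If $J_\sim$ contains a Jordan curve, then by \cite{blolev99} it contains a periodic one, and the density of the orbit of $K$ forces, exactly as in the proof of Lemma~\ref{l:condens0}, that $J_\sim=\uc$ and $\sim$ is trivial; in that case $g$ is an expanding circle map, a non-degenerate $K$ has $g^j(K)=\uc$ for large $j$, and the conclusion is immediate. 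So it remains to treat the case in which $J_\sim$ is a non-degenerate dendrite, where $J_\sim$ is an unshielded locally connected continuum and Lemma~\ref{l:densco} will apply as soon as each $U_t$ is shown to be connected and dense.

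First I would settle connectedness. Applying the continuous map $f_\sim^{nj+t}$ to the nonempty set $f_\sim^n(K)\cap K$ gives $f_\sim^{n(j+1)+t}(K)\cap f_\sim^{nj+t}(K)\ne\0$, so consecutive terms of $U_t$ overlap. Each $f_\sim^{nj+t}(K)$ is a continuum, being a continuous image of $K$, so $U_t$ is a chain of pairwise-overlapping continua and hence connected. Next I would reduce density of all $U_t$ to the single statement $\overline{U_0}=J_\sim$. Indeed $f_\sim^i(U_0)=U_i$, so $\overline{U_i}=f_\sim^i(\overline{U_0})$; and for $t=nq+s$ with $0\le s<n$ one has $U_t=f_\sim^{nq}(U_s)$, whence $\overline{U_t}=f_\sim^{nq}(\overline{U_s})$. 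Since $f_\sim$ maps $J_\sim$ onto itself, the equality $\overline{U_0}=J_\sim$ propagates to $\overline{U_t}=J_\sim$ for every $t$.

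Set $F=\overline{U_0}$. From $g(U_0)\subseteq U_0$ we get $g(F)\subseteq F$, so $F$ is an $f_\sim^n$-forward-invariant subcontinuum; moreover $\bigcup_{m\ge0}f_\sim^m(F)=\bigcup_{i=0}^{n-1}f_\sim^i(F)$ is a finite union of continua, hence closed, and it contains the dense set $\bigcup_{m\ge0}f_\sim^m(K)$, so the $f_\sim$-orbit of $F$ covers $J_\sim$: $\bigcup_{i=0}^{n-1}f_\sim^i(F)=J_\sim$. The heart of the matter, and the step I expect to be the main obstacle, is to deduce $F=J_\sim$. The underlying principle is that a topological polynomial expands, so non-degenerate subcontinua cannot remain small under iteration. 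Concretely, I would argue that if $F\ne J_\sim$ then, since $F$ is a subdendrite of the dendrite $J_\sim$, each complementary component of $F$ attaches at a single point of $F$; the relations $f_\sim^n(F)\subseteq F$ and $\bigcup_{i=0}^{n-1}f_\sim^i(F)=J_\sim$ then force $f_\sim$ to permute these attaching points periodically, producing a rotation of arms at a cutpoint and hence a periodic Jordan curve in $J_\sim$, which is impossible in a dendrite and is excluded by \cite{blolev99}. An alternative formulation I would try to make rigorous uses the nested sequence $F\supseteq g(F)\supseteq g^2(F)\supseteq\cdots$, whose intersection is $g$-invariant with surjective $f_\sim$-orbit; for a topological polynomial on a dendrite this is possible only when $F=J_\sim$. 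Once $F=J_\sim$ is in hand, each $U_t$ is a connected dense subset of the unshielded locally connected continuum $J_\sim$, and Lemma~\ref{l:densco} shows $U_t$ is condense and contains all cutpoints of $J_\sim$.

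Finally, for the last assertion, suppose $f_\sim^n(K)\subseteq K$. Then $f_\sim^{nj}(K)\subseteq K$ for all $j\ge 0$, so $U_0=K$. By the main part $U_0$ is dense in $J_\sim$; but $K$ is closed, whence $K=\overline{U_0}=J_\sim$.
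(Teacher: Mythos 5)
Your connectedness argument (overlapping chain of continua), the reduction of all $U_t$ to $U_0$, and the deduction of the final claim from density all match the paper's proof, and Lemma~\ref{l:densco} is indeed the right closing step. The genuine gap is at what you yourself call the heart of the matter: you never actually prove that $\overline{U_0}=J_\sim$. The paper's proof of this density step uses two ingredients you are missing: the Baire Category Theorem (since the finitely many closed sets $\overline{U_0},\dots,\overline{U_{n-1}}$ cover $J_\sim$, some $\overline{U_s}$ has nonempty interior) and the key dynamical fact that $f_\sim$ eventually maps any nonempty open subset of $J_\sim$ onto all of $J_\sim$ --- an expansion property inherited from $\sigma_d$ through the quotient map $\pi_\sim$. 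From these, $f_\sim^r(\overline{U_s})=J_\sim$ for some $r$, and since $f_\sim^i(\overline{U_s})=\overline{U_{s+i}}$, all the $\overline{U_t}$ equal $J_\sim$. Your proposal invokes no form of this expansion property, and neither of your substitute arguments can replace it.

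Concretely: (a) the claim that $f_\sim^n(F)\subseteq F$ and $\bigcup_{i=0}^{n-1}f_\sim^i(F)=J_\sim$ force a permutation of attaching points ``producing a rotation of arms at a cutpoint and hence a periodic Jordan curve'' is a non sequitur --- dendrite topological polynomials routinely permute arms at a fixed branch point (the rabbit lamination has a fixed point whose three arms are cyclically permuted) and no Jordan curve arises from this; (b) the nested-intersection variant is circular: $\bigcap_{k\ge 0} g^k(F)$ is indeed $g$-invariant, but nothing guarantees that its $f_\sim$-orbit covers $J_\sim$ (it may well be a single fixed point), and the assertion that invariance ``is possible only when $F=J_\sim$'' is precisely the statement to be proved. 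In fact no purely topological argument from the two relations you isolate can succeed: on the dendrite $[-1,1]$ take $h(x)=-x$ and $F=[-1,0]$; then $h^2(F)\subseteq F$ and $F\cup h(F)=[-1,1]$, yet $F$ is proper. Of course $h$ is not a topological polynomial --- which is exactly the point: any correct proof must use the expansion of $\sigma_d$, and that is what the paper's Baire-plus-expansion argument supplies.
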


Observe that in this lemma we do not assume that $f$ is condense.

\begin{proof}
  Under the hypotheses, $A_0=\bigcup f_\sim^{nk}(K)$ is a connected
  subset of $J_\sim$, and so are the sets $A_l=\bigcup
  f_\sim^{nk+l}(K)$ where $1 \le l \le n-1$.  By the assumption, the
  union $A=\cup^{n-1}_{l=0}A_l$ is dense in $J$. Observe that
  $f_\sim(A_l)\subset A_{l+1}$, where indices are interpreted modulo
  $n$.

  Since $\bigcup_{l=0}^{n-1} \overline{A_l} = J_\sim$ it follows from
  the Baire Category Theorem that some $\ol{A_s}$ contains an open
  subset of $J_\sim$.  Since $f_\sim$ eventually maps any open set
  onto $J_\sim$, it follows that $f_\sim^r(\ol{A_s})=J_\sim$ for
  some $r\ge 0$. Hence, for all $i\ge 0$,
  $f^{r+i}_\sim(\ol{A_s})=\ol{A_{s+i}}=J_\sim$, and so for any $t$ the
  set $A_t$ is connected and dense in $J_\sim$.  Then
  Lemma~\ref{l:densco} implies that $A_t$ is condense and contains all
  cutpoints of $J_\sim$.

  In the case that $f_\sim^n(K) \subset K$, it follows that $A_0
  \subset K$; that $K$ is closed and $A_0$ is dense implies that $K
  = J_\sim$.
\end{proof}

The next lemma shows that condense maps resemble transitive maps.
Recall that any topological polynomial on a dendrite must have fixed
cutpoints (see, e.g., \cite{thu85, bfmot11}).

\begin{lemma}\label{l:condens2}
  For any topological polynomial $f_\sim$, the following claims are
equivalent.
  \begin{enumerate}
  \item \label{condmap} $f_\sim$ is condense.
  \item \label{densecont} The orbit of every continuum $K \subset
    J_\sim$ is dense.
  \item \label{denseint} The orbit of every interval $I \subset
    J_\sim$ is dense.
  \item \label{nope} There are no proper periodic continua in $J_\sim$.
  \end{enumerate}
  Moreover, if these conditions are satisfied, then the set of all
points with condense orbits is residual in every interval $I\subset
J_\sim$.
\end{lemma}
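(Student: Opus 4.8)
The plan is to show that each of conditions~(\ref{condmap}),~(\ref{densecont}),~(\ref{denseint}) implies~(\ref{nope}), that~(\ref{nope}) in turn implies all three, and to read off the residuality statement from the construction. By Lemma~\ref{l:condens0} we may assume $J_\sim$ is a dendrite, the degenerate case and the circle (trivial lamination) being immediate. The equivalence $(\ref{densecont})\Leftrightarrow(\ref{denseint})$ is harmless: an interval is a continuum, and conversely every nondegenerate subcontinuum of a dendrite contains an arc whose orbit it dominates, so dense orbits of intervals force dense orbits of continua.

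All the implications into~(\ref{nope}) rest on the second assertion of Lemma~\ref{l:condens1}. Suppose $C$ is a proper periodic continuum with $f_\sim^n(C)=C$, and put $E=\bigcup_{i=0}^{n-1}f_\sim^i(C)$, so that $f_\sim(E)=E$ and $E$ is exactly the orbit of $C$. If $E=J_\sim$ then $C$ has dense orbit while $f_\sim^n(C)\subseteq C$, so Lemma~\ref{l:condens1} forces $C=J_\sim$, contradicting properness; this case is common to both hypotheses. This already yields $(\ref{densecont})\Rightarrow(\ref{nope})$, since if instead $E\ne J_\sim$ then the orbit of $C$ is closed and proper, hence not dense, contradicting~(\ref{densecont}) applied to $K=C$. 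For $(\ref{condmap})\Rightarrow(\ref{nope})$ the case $E\ne J_\sim$ instead uses condensity: a condense orbit must meet the nondegenerate continuum $C$, say at time $N$, after which the tail is trapped in $E$; the open set $J_\sim\sm E$ contains infinitely many pairwise disjoint arcs, and the finite initial segment cannot meet them all, so some arc is missed, contradicting condensity.

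The substance of the lemma is $(\ref{nope})\Rightarrow(\ref{densecont})$ and the subsequent sharpening of ``dense'' to ``condense''. For the first, given a nondegenerate continuum $K$ I would set $K_0=K$, let $K_{i+1}$ be the smallest subcontinuum of $J_\sim$ containing $K_i\cup f_\sim(K_i)$, and let $K_\infty$ be the closure of $\bigcup_i K_i$: this is a forward invariant subcontinuum containing the orbit of $K$. If $K_\infty\ne J_\sim$ the images $f_\sim^m(K_\infty)$ decrease, and $K_*=\bigcap_m f_\sim^m(K_\infty)$ is a continuum with $f_\sim(K_*)=K_*$; were it nondegenerate it would be a proper fixed continuum, contradicting~(\ref{nope}). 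The one remaining possibility---that $K_*$ is a single fixed point, i.e.\ that a nondegenerate continuum is contracted to a point under iteration---is the main obstacle, and I expect it to require the non-contracting (expanding) nature of $f_\sim$ on $J_\sim$ rather than soft topology; the same circle of ideas should moreover give the stronger \emph{covering property} that under~(\ref{nope}) every arc eventually maps \emph{onto} $J_\sim$. Granting these, $K_\infty=J_\sim$, so the orbit of every continuum is dense, which is~(\ref{densecont}).

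To close with~(\ref{nope})$\Rightarrow$(\ref{condmap}) and to obtain the final assertion I would upgrade density to condensity. By the covering property each arc $K$ satisfies $f_\sim^n(K)\cap K\ne\emptyset$ and has dense orbit, so Lemma~\ref{l:condens1} makes the orbit of every arc condense. Next, the preperiodic points form a \emph{countable} condense set $B$: they are countable (pull-backs of the countably many periodic points), and each arc contains one, since some iterate of the arc contains a chosen periodic point. Hence the countable family $\mathcal Z=\{[\beta,\beta']_{J_\sim}:\beta,\beta'\in B\}$ is \emph{determining} for condensity---every nondegenerate continuum contains a member of $\mathcal Z$, because $B$ meets the relative interior of any arc in two points---so a set is condense as soon as it meets every member of $\mathcal Z$. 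Fixing an arc $I$, it then suffices to prove that for each $[\beta,\beta']\in\mathcal Z$ the set $\{x\in I:\text{the orbit of }x\text{ meets }[\beta,\beta']\}$ is residual in $I$ and to intersect over $\mathcal Z$ by Baire; nonemptiness of the intersection yields~(\ref{condmap}) as well. Each such hitting set is at least dense in $I$, because the orbit of every subarc of $I$ is condense and so meets $[\beta,\beta']$; promoting this to residuality is the delicate step, and for it I would combine the covering property with the local injectivity of $f_\sim$ off critical iterates to produce, in every subarc of $I$, a further subarc mapping \emph{into} $[\beta,\beta']$, thereby exhibiting a dense open subset of $I$ inside the hitting set.
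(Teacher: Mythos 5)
Your treatment of the easy implications (the equivalence of (\ref{densecont}) and (\ref{denseint}), and the implications from (\ref{condmap}) and (\ref{densecont}) into (\ref{nope}) via the second part of Lemma~\ref{l:condens1}) is correct and agrees with the paper. But the heart of the lemma is (\ref{nope})$\Rightarrow$(\ref{densecont}) and the residuality claim, and there your outline has genuine gaps. Your construction proves the wrong thing: $K_\infty$ is the closure of an increasing union of \emph{connected hulls} $K_i$, which contain the arcs joining the images $f_\sim^i(K)$, so even if $K_\infty=J_\sim$ this does not make the \emph{orbit} $\bigcup_i f_\sim^i(K)$ dense. You patch this with a ``covering property'' (every arc eventually maps \emph{onto} $J_\sim$), but this is left unproven, and it is a substantially stronger statement than what is needed: it would require, for instance, that the preimage $\pi_\sim^{-1}(I)$ of every arc contain an arc of $\ucirc$, which fails in general (the angles landing on an invariant Hubbard-tree-like subcontinuum form a closed set with empty interior), so under hypothesis (\ref{nope}) it would need a real argument, not a remark that ``the same circle of ideas'' gives it. Likewise, ruling out that $K_*=\bigcap_m f_\sim^m(K_\infty)$ is a single point is exactly the non-contraction property you say you ``expect'' to hold; it is not soft and you do not prove it. The paper avoids all of this by invoking the no-wandering-continua theorem of \cite{blolev99}: for every continuum $L$ there exist $m$ and $n>0$ with $f_\sim^m(L)\cap f_\sim^{m+n}(L)\ne\emptyset$, so the tail union $\bigcup_i f_\sim^{m+ni}(L)$ is already connected, its closure is a periodic continuum, and (\ref{nope}) forces that closure to be $J_\sim$ --- giving density of the orbit itself in one step. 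You never invoke this result, and without it (or an equivalent) your outline cannot be completed.

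The same problems propagate into your proof of the final statement. Your determining family $\mathcal Z$ of arcs between preperiodic points again needs the covering property (to place a preperiodic point inside every arc), and the step you yourself call delicate --- promoting density of each hitting set to residuality --- is exactly where the content lies; ``local injectivity off critical iterates'' is not an argument. The paper does this concretely: it fixes a countable family $\{A_s\}$ of arcs such that every subcontinuum contains some $A_s$, uses \cite{blolev99} together with Lemma~\ref{l:condens1} to show that a fixed cutpoint $\alpha$ lies in $f_\sim^i(I)$ for all large $i$, chooses a component $K$ of $J_\sim\setminus A_s$ separated from $\alpha$ by $A_s$, uses (\ref{densecont}) to make $f_\sim^k(I)$ reach $K$ while containing $\alpha$ (hence meet $A_s$ in an open set), and pulls back by the finite-to-one map $f_\sim^k$ to get an open subset of $I$ mapping into $A_s$; repeating on subintervals makes each $B_s$ contain a dense open set, and Baire finishes. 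Finally, a minor point: your opening reduction via Lemma~\ref{l:condens0} is circular for the directions starting from (\ref{densecont}) or (\ref{nope}), since that lemma assumes condensity; the paper instead disposes of the non-dendrite case inside the proof, showing via Lemma~\ref{l:condens1} that a periodic boundary of a complementary domain must be all of $J_\sim$, so that $f_\sim$ is conjugate to $z\mapsto z^d$.
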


\begin{proof}
  Since every subcontinuum of $J_\sim$ contains an interval, it is
  clear that (\ref{denseint}) and (\ref{densecont}) are equivalent.
  If a point $x \in J_\sim$ has condense orbit and $K \subset J_\sim$
  is a continuum, then $x$ must enter $K$, and the orbit of $K$ is dense.
  This shows that (\ref{condmap}) implies (\ref{densecont}). Moreover, by Lemma~\ref{l:condens1},
  (\ref{condmap}) implies (\ref{nope}).

  Let us show that (\ref{densecont}) and (\ref{nope}) are
equivalent. Suppose that (\ref{densecont}) holds and let $K$ be a
periodic continuum $K$. Then $K$ has to have a dense orbit which by
Lemma~\ref{l:condens1} implies that $K=J_\sim$. Suppose that
(\ref{nope}) holds and let $L\subset J_\sim$ be a continuum. By
\cite{blolev99} there exist $m$ and $n>0$ such that $f^m_\sim(L)\cap
f_\sim^{m+n}(L)\ne \0$. Then the set
$\ol{\bigcup^\infty_{i=0}f^{m+ni}_\sim(L)}=T$ is a periodic
continuum which by the assumption coincides with $J_\sim$. Hence $L$
has a dense orbit as desired.

  Let us show that (\ref{densecont}) implies (\ref{condmap}).  If
  $J_\sim$ has a bounded complementary domain $U$, then we may assume
  that $\bd(U)$ is periodic. By Lemma~\ref{l:condens1} we conclude
  that $\bd(U)=J_\sim$, so $f_\sim$ is conjugate to $z \mapsto z^d$
  and condense. Therefore we may assume that $J_\sim$ is a dendrite.
  Let $\{A_i\,\mid\,i \ge 0\}$ be a countable collection of closed
  arcs such that any continuum $K \subset J_\sim$ contains some $A_s$.
  For convenience, we choose the sequence $\{A_i\}$ so that no element of the sequence
  contains an endpoint of $J_\sim$.

  Let $I \subset J_\sim$ be an arc; we will show for each $s \ge 0$
  that $B_s = \{x \in I \, \mid \, f_\sim^k(x) \in A_s \text{ for some
    $k$}\}$ is an open and dense subset of $I$. Let $\alpha$ denote a
  fixed cutpoint of $J_\sim$.  It follows that, for $i$ sufficiently
  large, $\alpha \in f^i(I)$.  This is because no subcontinuum of
  $J_\sim$ is wandering, i.e., there exists $s, n$ such that
  $f^s_\sim(I)\cap f_\sim^{s+n}(I)\ne\0$ \cite{blolev99}. By
  Lemma~\ref{l:condens1}, for some $M \ge 0$ we have $\alpha \in
  f^{s+Mn}(I)$; since $\alpha$ is fixed, $\alpha \in f^i(I)$ for all
  $i\ge s+Mn$.

  There exist components $K$ of $J_\sim \setminus A_s$ such that every
  arc intersecting $K$ and containing $\alpha$ also contains a
  subinterval of $A_s$. Since every continuum in $J$ has a dense
  orbit, there exists $k \ge 0$ such that $\alpha \in
  f_\sim^k(I)$ and $f_\sim^k(I) \cap K \neq \emptyset$.  Hence,
  $f_\sim^k(I)$ intersects $A_s$ in an open subset.  Since $f_\sim^k$
  is finite-to-one, this implies that an open subset of $I$ maps into
  $A_s$.  Since we can repeat this argument on any subinterval of $I$,
  $B_s$ is a dense open subset of $I$.

  By the Baire Category Theorem, $\bigcap_{s \ge 0} B_s$ is then a
  residual (and hence non-empty) subset of $I$; this is the set of
  points in $I$ which eventually map into each $A_s$, and therefore into
  every subcontinuum of $J_\sim$ as desired.
\end{proof}

Powers of condense maps are condense, too.

\begin{lemma}\label{l:condens3}
If $f_\sim$ is condense and $s \ge 1$, then $f_\sim^s$ is condense.
\end{lemma}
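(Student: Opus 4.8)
The plan is to prove Lemma~\ref{l:condens3} by leveraging the characterization of condensity established in Lemma~\ref{l:condens2}, rather than working directly with a point of condense orbit. Since $f_\sim$ is condense, condition~(\ref{nope}) of Lemma~\ref{l:condens2} tells us that there are no proper periodic continua in $J_\sim$ for the map $f_\sim$. To show $f_\sim^s$ is condense, it suffices by the same lemma (applied now to the map $f_\sim^s$, which is itself a topological polynomial since it is $\pi_\sim\circ\si_d^s\circ\pi_\sim^{-1}$ and $\sim$ is $\si_d^s$-invariant whenever it is $\si_d$-invariant) to verify that $f_\sim^s$ also has no proper periodic continua.

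The key observation is that a proper continuum $K$ periodic under $f_\sim^s$ would be periodic under $f_\sim$ as well: if $f_\sim^{sm}(K)=K$ for some $m\ge 1$, then $K$ is $f_\sim$-periodic with period dividing $sm$. Thus the absence of proper $f_\sim$-periodic continua immediately forbids proper $f_\sim^s$-periodic continua. First I would record that $\sim$ being a condense lamination forces $J_\sim$ to be either trivial (the circle, with $f_\sim$ conjugate to $z\mapsto z^d$) or a dendrite, by Lemma~\ref{l:condens0}; in the trivial case $f_\sim^s$ is conjugate to $z\mapsto z^{d^s}$ and condensity is transparent. In the dendrite case I would invoke the equivalence of~(\ref{condmap}) and~(\ref{nope}) in Lemma~\ref{l:condens2}, applied to $f_\sim^s$, to conclude.

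The one point demanding care, and the place I would expect to do actual work, is confirming that $f_\sim^s$ legitimately satisfies the standing hypotheses of Lemma~\ref{l:condens2}, namely that $f_\sim^s$ is a topological polynomial on the same topological Julia set $J_\sim$. This reduces to noting that if $\sim$ is a $\si_d$-invariant lamination then it is automatically $\si_{d^s}$-invariant (the defining properties (E1)--(E3), (D1)--(D3) are preserved under iteration of $\si_d$, since $\si_{d^s}=\si_d^s$ and compositions of covering maps with positive orientation are again such), so $J_{\sim}=\ucirc/\sim$ is the topological Julia set for $f_\sim^s=\pi_\sim\circ\si_{d^s}\circ\pi_\sim^{-1}$ as well. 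Once this is in place, the proof is essentially a two-line appeal: no proper $f_\sim$-periodic continua implies no proper $f_\sim^s$-periodic continua, which by Lemma~\ref{l:condens2} implies $f_\sim^s$ is condense. I would therefore keep the argument short and front-load the verification that the periodicity-of-continua criterion transfers cleanly between $f_\sim$ and its iterates.
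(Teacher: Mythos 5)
Your proof is correct, and it is a genuinely different (and more economical) route through the same toolkit. The paper also funnels the claim through Lemma~\ref{l:condens2}, but it verifies condition~(\ref{densecont}) for $f_\sim^s$: it reduces to the dendrite case via Lemma~\ref{l:condens0}, fixes a cutpoint $\alpha$ of $J_\sim$, uses Lemma~\ref{l:condens1} to arrange $\alpha\in f_\sim^{ks}(K)$ for a multiple $ks$ of $s$, and then applies Lemma~\ref{l:condens1} again with $n=s$ to conclude that the $f_\sim^s$-orbit of an arbitrary continuum $K$ is dense. You instead verify condition~(\ref{nope}): condensity of $f_\sim$ forbids proper $f_\sim$-periodic continua by Lemma~\ref{l:condens2}, and any continuum with $f_\sim^{sm}(K)\subset K$ (or $=K$) is in particular $f_\sim$-periodic, so $f_\sim^s$ has no proper periodic continua either; then (\ref{nope})$\Rightarrow$(\ref{condmap}) applied to $f_\sim^s$ finishes. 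Your version avoids the fixed-cutpoint argument and Lemma~\ref{l:condens1} entirely, precisely because the periodic-continuum criterion is manifestly insensitive to passing to iterates, whereas the density-of-orbits criterion is not and requires the paper's extra work. Both arguments share the same hidden hypothesis: Lemma~\ref{l:condens2} is stated for topological polynomials, so one must know that $f_\sim^s$ is itself a topological polynomial on $J_\sim$, i.e., that a $\si_d$-invariant lamination is $\si_{d^s}$-invariant. The paper leaves this entirely implicit, while you flag it and sketch the verification, which is a point in your favor. One small inaccuracy: you quote Lemma~\ref{l:condens0} as giving only the trivial and dendrite cases, omitting the degenerate one ($J_\sim$ a point); this costs nothing, since condensity there is vacuous, but the trichotomy should be stated in full.
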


\begin{proof}
  By Lemma~\ref{l:condens2} we need to show that any continuum
  $K\subset J_\sim$ has dense $f_\sim^s$-orbit in $J_\sim$. By
  Lemma~\ref{l:condens0} we only need to consider the case that
  $J_\sim$ is a dendrite.  Let $\alpha \in J_\sim$ be a fixed
  cutpoint. By Lemma~\ref{l:condens1} there exists $i \ge 0$ such that
  $\al\in f^i_\sim(K)$; since $\al$ is fixed we may assume that $i=ks$
  for some integer $k$.  Clearly, $(f_\sim^s)^{k+1}(K) \cap
  (f_\sim^{s})^k(K)\ne \0$, since it contains $\alpha$. By
  Lemma~\ref{l:condens1},
  $\bigcup^\infty_{j=0}f_\sim^{js}(f_\sim^{ks}(K))$ is a connected
  condense subset of $J$, so the $f_\sim^{ks}$-orbit of $K$ is
  condense. Since $K$ was an arbitrary continuum in $J_\sim$,
  $f_\sim^s$ is condense by Lemma~\ref{l:condens2}.
\end{proof}

\begin{theorem}\label{t:condens1}
Let $P$ be a polynomial with connected Julia set. Then the following
claims hold.

\begin{enumerate}

\item Suppose that the finest model $J_\sim$ of $J_P$, given by a
    lamination $\sim$, is non-degenerate, all points of $J_\sim$ are of
    finite order, and $f_\sim$ is condense. Then $J_P$ is locally connected and
    $P|_{J_P}$ is conjugate to $f_\sim$.

\item Suppose that $P|_{J_P}$ is condense. Then $P$ has no proper periodic subcontinua (in particular, $P$ is
    non-renormalizable), $J_P$ is locally connected and $P$ is conjugate to $g_P$ from Theorem~\ref{t:model}.

\end{enumerate}

\end{theorem}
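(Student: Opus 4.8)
\emph{Setup and reduction.} Both statements are best read through the finest monotone model $m_P=M_P|_{J_P}\colon J_P\to J_\sim$ of Theorem~\ref{t:model}. The elementary but decisive observation is that $J_P$ is locally connected if and only if $m_P$ is a homeomorphism: if $J_P$ is locally connected then the identity is already a monotone map of $J_P$ onto a locally connected continuum, so by the finest-model property (Theorem~\ref{t:model}(3)) $m_P$ must be injective; conversely a homeomorphic image of the (always locally connected) topological Julia set $J_\sim$ is locally connected. Since $m_P$ is monotone, injectivity means every fiber $m_P^{-1}(x)$ is a point, and since $m_P$ sends impressions to points and such a fiber is the union of the impressions $I_\alpha$ of the angles landing at $x$, this is the same as every such impression being degenerate, i.e. every external ray landing. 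Finally, when $m_P$ is a homeomorphism it conjugates $P|_{J_P}$ to $g_P|_{J_\sim}=f_\sim$ (Theorem~\ref{t:model}(4)), yielding the conjugacy conclusions. So in both parts the whole problem reduces to showing that, under the relevant hypotheses, every external ray lands.

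\emph{Part (1).} By Lemma~\ref{l:condens0}, since $f_\sim$ is condense and $J_\sim$ is non-degenerate, $J_\sim$ is either the unit circle (whence $J_P$ is a Jordan curve and we are done) or a dendrite; assume the latter. Fix an angle $\alpha$ and set $x=m_P(I_\alpha)$, a point of finite order by hypothesis, so by the order formula following Theorem~\ref{t:model} the angles landing at $x$ form a \emph{finite} $\sim$-class $\mathbf g_x$. If $x$ is preperiodic, then $\si_d$ permutes the finite set $\mathbf g_x$ along the orbit, so $\alpha$ is a (pre)periodic angle; since $J_P$ is connected, (pre)periodic rays land (Douady--Hubbard), whence $I_\alpha$ is degenerate. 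The remaining case, in which $x$ has an infinite orbit, is exactly the ``wandering vertex'' situation and is the main obstacle. Here I would pull back a small arc $A=[x,y]_{J_\sim}$ running into one arm at $x$ (a non-degenerate continuum); by Lemma~\ref{l:condens2} its $f_\sim$-orbit is dense, by \cite{blolev99} it is non-wandering, and so by Lemma~\ref{l:condens1} the returns of $A$ form a condense set containing all cutpoints, giving $x\in f_\sim^{N}(A)$ for some $N\ge 1$. Lifting through the semiconjugacy, the continuum $\widetilde{A}=m_P^{-1}(A)\supseteq I_\alpha$ then satisfies $P^{N}(\widetilde{A})\cap\widetilde{A}\ne\0$, so $T=\overline{\bigcup_{j}P^{Nj}(\widetilde{A})}$ is a continuum with $P^{N}(T)\subseteq T$ and $m_P(T)=J_\sim$; the goal is to convert this recurrence of $T$ near the non-degenerate impression $I_\alpha$ into infinitely many distinct angles whose impressions meet a single finite-order fiber, contradicting the order formula.

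\emph{Main obstacle.} I expect the hard point to be precisely this last step in Part (1). The reason the classical No Wandering bound (\cite{blolev99, kiwigap}) cannot be quoted directly is that $m_P$ collapses the recurrence produced above to the \emph{single} point $x\in J_\sim$, so it is invisible to the laminational theory; one is forced to argue inside $J_P$ and to keep precise control of the set of external angles that access the impressions, showing that a non-landing ray over a non-preperiodic finite-order point is incompatible with the condensity of $f_\sim$.

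\emph{Part (2).} First transfer condensity: for a non-degenerate continuum $C\subset J_\sim$ the preimage $m_P^{-1}(C)$ is a non-degenerate subcontinuum of $J_P$, so the condense $P$-orbit projects to a condense $f_\sim$-orbit, and $f_\sim$ is condense. The model $J_\sim$ cannot be degenerate: a degenerate finest model forces $J_P$ to be indecomposable, hence to have uncountably many pairwise disjoint composants, each containing non-degenerate subcontinua, which no countable (hence condense) orbit can meet. For ``no proper periodic subcontinuum,'' suppose $K\subsetneq J_P$ is a non-degenerate continuum with $P^{p}(K)=K$; the condense orbit enters the closed forward-invariant cycle $\bigcup_{i<p}P^{i}(K)$ and, by the boundary-bumping fact that every non-empty open subset of a continuum contains a non-degenerate subcontinuum, that cycle must be all of $J_P$. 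Pushing $K$ into $J_\sim$ and using Lemma~\ref{l:condens2}, $m_P(K)$ is either a point (forcing $J_\sim$ finite, hence degenerate, excluded) or all of $J_\sim$; in the latter case each $P^{i}(K)$ meets every fiber, so every single-point fiber lies in $\bigcap_i P^{i}(K)$, and since such fibers are dense (the non-degenerate fibers are a countable, nowhere dense family, as $m_P$ is the finest model) we get $\bigcap_i P^{i}(K)=J_P$, i.e. $K=J_P$. With no proper periodic continua available, non-degenerate fibers over (pre)periodic base points are excluded, and the only remaining obstruction is again a non-degenerate fiber over an infinite-orbit point, which is handled exactly as the obstacle in Part (1). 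This establishes the hypotheses of Part (1), so $J_P$ is locally connected and $m_P$ is a homeomorphism; then $M_P$ restricts to a conjugacy of $P|_{J_P}$ with $g_P|_{J_\sim}$, giving $P\cong g_P$, while ``no proper periodic subcontinuum'' is now the transfer of Lemma~\ref{l:condens2}(\ref{nope}) and in particular yields non-renormalizability.
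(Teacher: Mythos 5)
Your opening reduction (that local connectivity of $J_P$ is equivalent to the finest monotone map $m_P$ being a homeomorphism, i.e.\ to all fibers, hence all impressions, being degenerate) is correct, but the plan built on it has a genuine --- and in fact openly admitted --- gap: the ``main obstacle'' you flag, namely showing that a non-degenerate fiber over a non-preperiodic finite-order point is incompatible with condensity, is never resolved, and your Part~(2) is explicitly routed back through that same unresolved step, so neither part is proved. The paper never fights this battle inside $J_P$ at all. Instead, it uses condensity only to exclude the three classical obstructions to local connectivity: Cremer points (in part~(1) via \cite[Lemma 37]{bco11}, which says that fibers over finite-order \emph{periodic} points of $J_\sim$ are repelling or parabolic periodic points; in part~(2) via Perez-Marco hedgehogs \cite{pm97}, which are proper periodic subcontinua), Siegel disks (in part~(1) a Siegel boundary would project to a periodic circle in $J_\sim$, impossible by Lemmas~\ref{l:condens1} and~\ref{l:condens2}; in part~(2) via Rogers' dichotomy \cite{rog92} and the uncountably many pairwise disjoint composants of an indecomposable continuum), and renormalization (a polynomial-like Julia set would give a proper periodic subcontinuum, again impossible). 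Having killed these, the paper invokes the Kozlovski--van Strien rigidity theorem \cite{kvs09}: a non-renormalizable polynomial with no irrationally neutral cycles has locally connected Julia set; the conjugacy statements then follow from Theorem~\ref{t:model}. That external theorem is the engine of the entire proof. Your proposal contains no substitute for it, and without such a substitute the strategy of directly proving every impression degenerate has no way to terminate.

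Two secondary errors compound the gap. In the preperiodic case of your Part~(1) you conclude from Douady--Hubbard landing of preperiodic rays that the impression $I_\alpha$ is degenerate; but landing of a ray does not make its impression a point (impressions can be non-degenerate even when the ray lands), so even this ``easy'' case requires \cite[Lemma 37]{bco11}, as in the paper, rather than the landing theorem. In Part~(2), the claims that a degenerate finest model forces $J_P$ to be indecomposable, and that the non-degenerate fibers of $m_P$ form a countable, nowhere dense family, are both unjustified (and not needed): the paper's argument that a periodic continuum $A\subset J_P$ must equal $J_P$ is simply Baire category applied to the finite union of images of $A$, one of which therefore has interior in $J_P$, together with the fact that open subsets of a Julia set eventually map onto it; no control of the fiber structure of $m_P$ is required, and once \cite{kvs09} gives local connectivity the degeneracy question disappears automatically.
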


Observe, that by this theorem $P|_{J_P}$ satisfies
Lemmas~\ref{l:condens0} - \ref{l:condens3}. Observe also, that by
Theorem~\ref{kiwistruct} (1) holds for polynomials associated with
condense critical portraits having aperiodic kneading.

\begin{proof}
  (1) Let $m:J_P\to J_\sim$ be the finest monotone map to a locally
connected continuum defined in Theorem~\ref{t:model}.
   Since the order of any periodic point $p \in J_\sim$ is finite,
  by \cite[Lemma 37]{bco11} the set $m^{-1}(p)$ is a repelling or parabolic
  periodic point. Hence, $P$ has no Cremer points: if $U$ were a
  periodic Siegel domain of $P$, then $m(\bd(U))$ would be a
  periodic subcontinuum of $J_\sim$ homeomorphic to a circle on
  which the appropriate power of the map is an irrational rotation,
  and hence a proper subcontinuum.

  Now we show that $P$ is non-renormalizable.  Indeed, if $P$ is
  renormalizable, then there exists a polynomial-like connected Julia
  set $J'\subsetneqq J_P$ which is a periodic continuum. If $m(J')$ is a point, then it is periodic and
  again by \cite[Lemma 37]{bco11} the set $m^{-1}(m(J'))$ is a point, a contradiction. Hence
  $m(J')$ is a periodic continuum in $J_\sim$. Clearly, $m(J')\ne J_\sim$. This
  contradicts Lemma~\ref{l:condens1} and
  Lemma~\ref{l:condens2} and shows that $P$ is non-renormalizable. Hence
  $J_P$ is locally connected \cite{kvs09}.  By
  Theorem~\ref{t:model},  $P|_{J_P}$ and $f_\sim$ are
  conjugate as required.

  (2) Assume now that $P|_{J_P}$ is condense.  Let us show that $J_P$
  has no proper periodic subcontinua. Indeed, let $A\subset J_P$ be a
  periodic continuum. Then the (finite) union $B$ of its images must
  coincide with $J_P$ (because $P|_{J_P}$ is condense). As at least
  one of these images must have non-empty interior, $A$ must
  coincide with $J_P$.

  This fact has several consequences.  To begin with, let us show that
  $J_P$ cannot have Cremer points. Indeed, suppose that $z_0\in J_P$ is
  a periodic Cremer point of period $p$. Then, for any small neighborhood
  $U$ of $z_0$, the component of the set $\{z\,\mid\,P^{kp}(z) \in \overline{U} \text{
  for all $k$}\}$ containing $z_0$, called a \emph{hedgehog}, is a
  proper periodic subcontinuum of $J_P$ \cite{pm97}, contradicting that $P|_{J_P}$ has no
  proper periodic subcontinuum.

  Now let us show that $P$ cannot have Siegel domains either. Since
$J_P$ contains no proper periodic subcontinua, then any periodic
Siegel domain $U$ of $P$ must be such that $\bd(U)=J_P$. By J.
Rogers' result \cite{rog92}, there are two cases. In the first case,
$P|_{\bd(U)}$ is monotonically semiconjugate to an irrational
rotation which contradicts the fact that $\bd(U)=J_P$. In the second case,
$\bd(U)$ is an \emph{indecomposable} continuum (i.e.,
cannot be represented as $A\cup B$ where $A$ and $B$ are proper
subcontinua of $\bd(U)$). Then, given a point $x\in \bd(U)$, one can
define the \emph{composant} of $x$ in $\bd(U)$, that is the union of
all proper subcontinua of $\bd(U)$ containing $x$. Then it is known
\cite[Theorem 11.15]{nadl92} that distinct non-degenerate composants
of $\bd(U)$ are pairwise disjoint and there are uncountably many of them.
Since the orbit of $x$ can only enter countably many composants of
$\bd(U)$, we have a contradiction with the assumption that $P|_{J_P}$ is condense.
Hence, $P$ does not have Siegel domains.

  Since $J_P$ has no proper periodic subcontinua, $P$ is non-renormalizable.
  Thus, as before, all this implies that $J_P$ is locally connected \cite{kvs09}.
  The rest follows from Theorem~\ref{t:model}.
\end{proof}

\section{Family of critical WT-portraits}\label{famwt}

First we show that condense laminations are residual in $\A_3$.

\begin{theorem}\label{t:condenscript}
  Let $\Ta\in \A_d$ be a critical portrait which consists of $d-1$
  critical chords whose orbits are dense in $\uc$. Then $\Ta$ has
  aperiodic kneading, is condense, and any polynomial $P$ associated
  to $\Ta$ has locally connected Julia set $J_P$ so that $P|_{J_P}$ is
  conjugate to $f_{\sim_\Ta}|_{J_{\sim_\Ta}}$.
\end{theorem}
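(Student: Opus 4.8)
The plan is to establish the three assertions in order, observing that the third is essentially automatic once the first two are in hand. Indeed, once we know $\Ta\in\AP_d$ and that $\Ta$ is condense, Theorem~\ref{kiwistruct} makes $J_{\sim_\Ta}$ a non-degenerate dendrite all of whose classes are finite, so all of its points have finite order; since (by Remark~\ref{rem:kiwimodel}) $\sim_\Ta$ is the finest model of any associated $P$, the remark following Theorem~\ref{t:condens1} records that part (1) of that theorem then yields exactly the desired conclusion, namely that $J_P$ is locally connected and $P|_{J_P}$ is conjugate to $f_{\sim_\Ta}$. So I would concentrate on (i) aperiodic kneading and (ii) condensity.

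For aperiodic kneading I would argue by contradiction. First note $J_{\sim_\Ta}$ is non-degenerate: angles in distinct $\Ta$-unlinked classes have different leading itinerary symbols and so are not $\sim_\Ta$-identified. Let $\theta\in A(\Ta)$ be an endpoint of a critical chord and suppose its kneading is periodic, say $i^+(\theta)$ has period $p$ (the $i^-$ case being symmetric). Since $\si_d$ is a local orientation-preserving homeomorphism at $\theta$, the one-sided itineraries obey $i^+(\si_d(\theta))=\mathrm{shift}(i^+(\theta))$, so $i^+(\si_d^{jp}(\theta))=i^+(\theta)$ for every $j\ge 0$. Density of the chord's orbit makes it infinite, so the angles $\{\si_d^{jp}(\theta)\}_{j\ge 0}$ are pairwise distinct yet share one one-sided itinerary; by Kiwi's construction of $\sim_\Ta$ (Definition~\ref{lamtheta}, \cite{kiwi97, ki4}) they lie in a single $\sim_\Ta$-class, which is therefore infinite. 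By the extension of the No Wandering Triangles Theorem to infinite classes \cite{kiwigap}, such a class is preperiodic, whence $\pi_{\sim_\Ta}(\theta)$ is a preperiodic point of $f_{\sim_\Ta}$ with finite orbit. This contradicts that its orbit is dense in the non-degenerate $J_{\sim_\Ta}$. Hence $\Ta\in\AP_d$ and Theorem~\ref{kiwistruct} applies.

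For condensity I would first project: since $\pi_{\sim_\Ta}\circ\si_d=f_{\sim_\Ta}\circ\pi_{\sim_\Ta}$ and $\pi_{\sim_\Ta}$ is a continuous surjection, the dense $\si_d$-orbit of a chord maps to a dense $f_{\sim_\Ta}$-orbit of the corresponding critical point $c_i\in J_{\sim_\Ta}$. By Lemma~\ref{l:condens2} it then suffices to show $f_{\sim_\Ta}$ has no proper periodic continuum. Suppose $K$ were one, of period $q$, and put $L=\bigcup_{r=0}^{q-1}f_{\sim_\Ta}^r(K)$, a closed forward-invariant set. If some critical orbit meets $L$, then, being trapped in $L$ yet dense, it forces $L=J_{\sim_\Ta}$; now every $c_i$ lies in the cycle, and a preimage count shows a generic interior point of a piece $f_{\sim_\Ta}^{r+1}(K)$ has all $d$ of its $\si_d$-preimages in the preceding piece, so each step $f_{\sim_\Ta}^r(K)\to f_{\sim_\Ta}^{r+1}(K)$ has degree $d$ and carries critical weight $d-1$; summing the $q$ steps gives $q(d-1)=d-1$, whence $q=1$ and $K=J_{\sim_\Ta}$, contradicting properness. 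If instead no critical orbit meets $L$, then $L$ contains no critical points, each step is a homeomorphism, and $K$ is a rotational periodic continuum, which cannot occur in a dendrite topological Julia set with only finite classes, as it would yield a periodic Jordan curve or an infinite (Fatou) class (cf. \cite{blolev99} and Lemma~\ref{l:condens0}). Either way we reach a contradiction, so $f_{\sim_\Ta}$ is condense.

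The main obstacle is this last dichotomy. Making the degree count rigorous requires controlling how the pieces $f_{\sim_\Ta}^r(K)$ overlap—the argument as stated presumes they meet only in (periodic) cut points, which must be justified from the dendrite structure—and cleanly excluding the homeomorphic-return (rotational) case is the genuinely delicate structural step, resting on the absence of rotational periodic continua in dendrite topological Julia sets whose laminations have only finite classes. Once no proper periodic continuum survives, Lemma~\ref{l:condens2} gives condensity and Theorem~\ref{t:condens1}(1) completes the proof.
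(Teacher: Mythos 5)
Your reduction of the final assertion to Theorem~\ref{kiwistruct}, Remark~\ref{rem:kiwimodel} and Theorem~\ref{t:condens1}(1) is exactly what the paper does, and attacking condensity through condition (4) of Lemma~\ref{l:condens2} is a legitimate strategy; but both of your main steps contain genuine gaps. For aperiodic kneading, your contradiction argument needs $J_{\sim_\Ta}$ to be non-degenerate \emph{before} aperiodic kneading is known, and your one-line justification (angles in distinct $\Ta$-unlinked classes have different leading symbols, hence are never identified) is unsound: $\sim_\Ta$ is the \emph{smallest closed equivalence relation} containing the pairs with $i^+(x)=i^-(y)$, so identifications propagate by transitive chains --- in particular through points of $A(\Ta)$, where the leading symbols of $i^+$ and $i^-$ can differ --- and by topological closure; the leading symbol is not an invariant of the generated relation. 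The only statement in the paper guaranteeing non-degeneracy of $J_{\sim_\Ta}$ is Theorem~\ref{kiwistruct}, which assumes $\Ta\in\AP_d$, i.e.\ the very thing you are proving, and the paper explicitly notes that the degenerate lamination identifying all of $\uc$ is compatible with \emph{every} critical portrait, so degeneracy really must be excluded: if $\sim_\Ta$ were degenerate, your infinite class would be the whole circle, which is fixed, and no contradiction with density would arise. (A smaller repairable point: the definition identifies $x$ with $y$ only when $i^+(x)=i^-(y)$, so equal $i^+$ itineraries alone do not force identification; you need that a dense orbit meets $A(\Ta)$ at most finitely often, whence $i^+=i^-$ at $\si_d^{jp}(\theta)$ for large $j$.) The paper avoids $\sim_\Ta$ entirely at this stage: a dense critical orbit enters every neighborhood of the fixed point $0$, so each one-sided kneading sequence contains arbitrarily long constant blocks of the symbol $C$ given by the unlinked class of $0$; a periodic sequence with arbitrarily long constant blocks is constant, which would confine the orbit of the corresponding angle to $\ol{C}\neq\uc$, contradicting density. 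You should replace your argument by this one (or by something equally independent of $\sim_\Ta$).

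For condensity, your Case A degree count is both unjustified and unnecessary. Unjustified: when $L=J_{\sim_\Ta}$, a point of $f_{\sim_\Ta}^{r+1}(K)$ is only guaranteed to have \emph{some} $f_{\sim_\Ta}$-preimage in $f_{\sim_\Ta}^{r}(K)$, not all $d$ of them; the other preimages may lie in other pieces, and the pieces may overlap in far more than cutpoints, so the identity $q(d-1)=d-1$ has no basis. Unnecessary: once the dense critical orbit is trapped in the closed invariant set $L$, the orbit of $K$ is dense, and since $f_{\sim_\Ta}^{q}(K)\subset K$ the final claim of Lemma~\ref{l:condens1} yields $K=J_{\sim_\Ta}$ at once. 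The real content is your Case B, which you concede you cannot close: ruling out a periodic continuum whose cycle avoids the critical points. This is precisely where the paper's key argument lives. For a subdendrite $B$ with $f^k_{\sim_\Ta}(B)\subset B$ (in the paper $B=\ol{\bigcup_i f^{m+ki}_{\sim_\Ta}(I)}$ for an arbitrary arc $I$; in your setting one would take the cycle of $K$), the map $f^k_{\sim_\Ta}|_B$ has infinitely many periodic cutpoints by Theorem~7.2.6 of \cite{bfmot11}; joining two of them by an arc $Q$, one observes that if $f^k_{\sim_\Ta}|_B$ had no critical point, then some power of it would restrict to a homeomorphism of $Q$ and would have a point attracting from at least one side, which is impossible for a topological polynomial; hence $B$ contains a critical point of $f^k_{\sim_\Ta}$, whose dense orbit (guaranteed by the hypothesis on $\Ta$) makes the orbit of $I$ dense, and Lemma~\ref{l:condens2} finishes. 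Without this step, or a rigorous substitute for your ``rotational'' exclusion, your dichotomy does not close, so the proposal as written is not a proof.
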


\begin{proof}
  Let us show that $\Ta$ has aperiodic kneading.  Indeed, the orbit of
  any critical leaf $\ell$ comes arbitrarily close to the fixed point
  $0$. Hence, if $C$ is the $\Ta$-unlinked class of $0$, then the
  itinerary of $\ell$ includes arbitrarily long segments consisting of
  $C$. This implies that $\Ta$ has aperiodic kneading, and
  Theorem~\ref{kiwistruct} applies.  Let $\sim$ denote the lamination
  generated by $\Ta$.

  Let us show that $\Ta$ is condense.  Take an arc $I\subset J_\sim$
  and consider its orbit. By \cite{blolev99} there are positive
  numbers $m, k$ with $f_\sim^m(I)\cap f_\sim^{m+k}(I)\ne 0$. Consider
  the connected set
  $A_0=\bigcup^\infty_{i=0}f_\sim^{m+ki}(I)$. Clearly,
  $\ol{A_0}=B\subset J_\sim$ is a subdendrite of $J_\sim$ and
  $f_\sim^k(B)\subset B$. Let us show that $f_\sim^k|_B$ has a
  critical point $c$. Indeed, by Theorem~7.2.6 of \cite{bfmot11} there
  are infinitely many periodic cutpoints of $f_\sim^k|_B$; let $Q
  \subset B$ be an arc joining some pair $x$ and $y$ of such periodic
  cutpoints.  If $f_\sim^k|_B$ has no critical points, then some power
  of $f_\sim^k|_Q$ is a homeomorphism and there must exist a point
  $z\in Q$ attracting for $g$ from at least one side, which is
  impossible.  Hence, $B$ contains a critical point of $f_\sim^k$.  By
  the assumptions on $\Ta$, $B$ contains a point with dense orbit, so
  $I$ has a dense orbit.  Since $I$ was arbitrary, we conclude by
  Lemma~\ref{l:condens2} that $\Ta$ is condense.

  Since $\Ta$ satisfies Theorem~\ref{kiwistruct}, and since $\sim =
  \sim_P$ by Remark~\ref{rem:kiwimodel},
  it follows from Theorem~\ref{t:condens1} (1) that $J_\sim$ is
  locally connected and that $P|_{J_P}$ is conjugate to $f_\sim$.

\end{proof}

Since the set of critical portraits consisting of $d-1$ critical leaves
with dense orbits in $\uc$ is residual in $\A_d$, we obtain the
following corollary.

\begin{corollary}
  A residual subset of critical portraits in $\A_d$ correspond to
  polynomials whose restrictions to their Julia sets are condense.
\end{corollary}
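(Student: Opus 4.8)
The plan is to deduce the corollary from Theorem~\ref{t:condenscript} by proving that the set $R\subset\A_d$ of critical portraits consisting of $d-1$ critical leaves whose $\si_d$-orbits are all dense in $\uc$ is residual. Once $R$ is shown to be residual, Theorem~\ref{t:condenscript} applies to each $\Ta\in R$ and produces an associated polynomial $P$ with $P|_{J_P}$ conjugate to the condense map $f_{\sim_\Ta}$; hence $P|_{J_P}$ is itself condense. The set of critical portraits corresponding to polynomials with condense restrictions to their Julia sets then contains $R$, so it is residual, which is exactly the assertion of the corollary.

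For the residuality of $R$ I would first record the ambient topological facts. The space $\A_d$ in the compact-unlinked topology of Definition~\ref{cu} is compact metrizable \cite{ki4}, hence a Baire space. Let $\mathcal L_d\subset\A_d$ be the set of portraits all of whose initial sets are leaves (so that there are exactly $d-1$ of them). This set is \emph{dense}, since any higher-cardinality initial set can be split into leaves by an arbitrarily small perturbation of the corresponding critical values while the other initial sets are held fixed, and it is \emph{open}, since in a limit of critical portraits initial sets may only coalesce and never split, so the number of initial sets is lower semicontinuous and its maximal value $d-1$ is attained on an open set. On $\mathcal L_d$ each leaf is the critical chord determined by one freely moving endpoint $a\in\uc$ together with a fixed gap $j/d$, and these $d-1$ endpoints depend continuously on $\Ta$.

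Working inside $\mathcal L_d$, observe that the forward orbit of a leaf is dense in $\uc$ if and only if the forward orbit of its critical value $\si_d(a)$ is dense, so I fix a countable basis $\{U_m\}_{m\ge1}$ of nonempty open arcs of $\uc$ and set
$$G_m=\{\,\Ta\in\mathcal L_d:\text{the orbit of every leaf of }\Ta\text{ meets }U_m\,\}.$$
Each $G_m$ is open, because ``$\si_d^n(a)\in U_m$ for some $n$'' is an open condition on each endpoint $a$. Each $G_m$ is also dense: for a single leaf the admissible endpoints $\bigcup_{n\ge1}\si_d^{-n}(U_m)$ form a dense open subset of $\uc$, since expansion of $\si_d$ scatters the preimages of $U_m$ throughout the circle, so an arbitrarily small perturbation moves every endpoint simultaneously into its admissible set; as the leaves of a portrait in $\mathcal L_d$ lie at positive pairwise distance, such a small perturbation preserves pairwise unlinkedness and keeps the portrait in $\mathcal L_d$. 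Being open and dense in the open dense subset $\mathcal L_d$, each $G_m$ is open and dense in $\A_d$, so $R=\bigcap_m G_m$ is a dense $G_\delta$, hence residual, in the Baire space $\A_d$; applying Theorem~\ref{t:condenscript} as above then completes the proof.

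The main obstacle I anticipate is not the perturbation argument, which is routine once $\mathcal L_d$ is viewed as an $\uc$-parametrized family of critical chords, but rather the two structural claims about the compact-unlinked topology: that $\mathcal L_d$ is open (equivalently, that initial sets cannot spontaneously split in a limit) and that the leaf-endpoints vary continuously on $\mathcal L_d$. Both require unwinding the subbasis $V_X=\{\Ta:X\subset L_\Ta\}$ of Definition~\ref{cu}; I would verify them by tracking, for a convergent sequence of portraits, how the $\Ta$-unlinked classes $L_1(\Ta),\dots,L_d(\Ta)$ and their boundary chords behave in the limit.
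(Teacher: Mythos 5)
Your proposal is correct and follows exactly the paper's route: the paper's entire proof of this corollary is the observation that critical portraits consisting of $d-1$ critical leaves with dense orbits form a residual subset of $\A_d$, combined with an application of Theorem~\ref{t:condenscript}. The only difference is that the paper asserts this residuality without proof, whereas you supply a Baire-category argument for it (openness and density of the all-leaf stratum and of the sets $G_m$); your sketch, including the flagged verification that initial sets cannot split in a limit in the compact-unlinked topology, is a sound filling-in of the detail the paper leaves to the reader.
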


Recall that a lamination with wandering $k$-gons ($k\ge 3$) is
called a \emph{WT-lamination}. A critical portrait, compatible with
a WT-lamination, is called a \emph{WT-critical portrait}; $\WT_3$ is
the set of all cubic WT-critical portraits. By Theorem~\ref{main1},
$\WT_3$ is a dense and locally uncountable subset of $\A_3$.

Now we show that $\WT_3$ is a meager subset of $\A_3$.  We will do
so by showing that the set of critical portraits in $\WT_3$
compatible with a wandering triangle of area at least $\frac 1 n$ is
disjoint from a particular dense subset of critical portraits.  The
dense subset we consider, called $\K$, is the set of critical
portraits consisting of two leaves $\{\mc, \md\} \in \A_3$ such that
the orbits of $\mc$ and $\md$ are dense, neither $\mc$ nor $\md$
maps to an endpoint of the other, and $\mc$ and $\md$ eventually map
to the same point.

\begin{lemma}\label{lem:classK}
The set $\K$ is dense in $\A_3$. All orbit portraits $\Ta \in \K$
have aperiodic kneading. The critical classes of the lamination
$\sim_\Ta$ generated by $\Ta$ are leaves.
\end{lemma}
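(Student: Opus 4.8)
The plan is to prove the three assertions in turn, using the dense-orbit hypothesis as the engine for the first two and Kiwi's rigidity (Theorem~\ref{kiwistruct}) for the third. For density, I would first observe that portraits consisting of two critical leaves are dense in $\A_3$, since the only other type of cubic critical portrait is a single three-point initial set, a nowhere-dense configuration; thus it suffices to approximate an arbitrary two-leaf portrait $\{\mc_0,\md_0\}$. Because the set of angles with dense $\si_3$-orbit is residual in $\uc$, I can choose a critical chord $\mc$ as close as desired to $\mc_0$ with dense orbit (note a dense orbit cannot contain the fixed point $0$, so $0\notin\mc$). To install the remaining conditions simultaneously, I would pick the second leaf $\md$ so that its critical value $\si_3(\md)$ equals a point $\si_3^N(\mc)$ occurring far out along the dense orbit of $\mc$ and lying near the critical value of $\md_0$; then $\md$ can be placed near $\md_0$, its orbit is automatically dense (a tail of a dense orbit is dense), and $\si_3(\md)=\si_3^N(\mc)$ forces $\mc$ and $\md$ to map eventually to the same point. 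Finally, since the orbits in question are countable, a further arbitrarily small perturbation of $\mc$ within the residual set of dense-orbit angles removes the countably many coincidences in which one leaf maps onto an endpoint of the other. This realizes a portrait of $\K$ in every neighborhood of $\{\mc_0,\md_0\}$.

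For aperiodic kneading I would reuse the mechanism from the proof of Theorem~\ref{t:condenscript}. Since the orbits are dense, $0\notin A(\Ta)$; let $C$ be the $\Ta$-unlinked class containing $0$. Each critical leaf returns arbitrarily close to the fixed point $0$ and, $0$ being fixed, stays close for arbitrarily many consecutive iterates, so the one-sided itineraries $i^{\pm}(\ta)$ of every endpoint $\ta\in A(\Ta)$ contain arbitrarily long blocks of the single symbol $C$. A sequence with arbitrarily long constant blocks that is not eventually constant cannot be periodic, and density of the orbit rules out the constant sequence; hence no endpoint of $\mc$ or $\md$ has periodic kneading, i.e.\ $\Ta\in\AP_3$. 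Theorem~\ref{kiwistruct} then applies: $\sim_\Ta$ is the unique $\Ta$-compatible lamination, all of its classes are finite, and $J_{\sim_\Ta}$ is a dendrite.

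For the statement that the critical classes are leaves, I would start from the fact that the total criticality of the degree-$3$ map $f_{\sim_\Ta}$ equals $d-1=2$ and is accounted for by the two critical chords $\mc,\md$, so these are the only critical chords of $\sim_\Ta$ and every critical class contains one of them. It then remains to show (A) that $\mc$ and $\md$ lie in distinct classes, and (B) that neither class acquires an extra vertex. For (A) I would argue by contradiction: if $\mc,\md$ shared a class, its $\si_3$-image would identify the distinct critical values $v_1=\si_3(\mc)$ and $v_2=\si_3(\md)$, so $v_1\sim_\Ta v_2$. Combined with the merging hypothesis $\si_3^{i}(\mc)=\si_3^{j}(\md)$, this either produces a third critical leaf somewhere in the common forward orbit (when $i=j$, where the no-endpoint condition shows it is neither $\mc$ nor $\md$), contradicting the budget; or it forces a point of the dense orbit to be $\sim_\Ta$-equivalent to a nontrivial forward image of itself (when $i\ne j$), which generates an infinite class and contradicts finiteness.

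The hard part will be (B). Ruling out an extra vertex requires the expansivity supplied by aperiodic kneading, namely that under $\sim_\Ta$ a class is pinned down by a shared one-sided itinerary and that the only coincidence forced at a critical chord is the identification of its own two endpoints. I would carry this out by tracking one-sided itineraries near $\mc$ and $\md$: an extra vertex $e$ in the class of $\mc=\{a,b\}$ would force $v_1\sim_\Ta\si_3(e)$ with $\si_3(e)\ne v_1$, so the forward leaf-orbit emanating from $v_1$ would consist of genuine leaves; using that $v_1$ has dense (hence non-preperiodic) orbit and that this leaf-orbit can meet neither $\mc$ (else the orbit of $\mc$ would be periodic or return to its own endpoint) nor $\md$ (else $\mc$ would map to an endpoint of $\md$, which is forbidden), I would show that the itinerary of $e$ cannot match that of $a$, so no such third angle exists and each critical class is exactly a leaf.
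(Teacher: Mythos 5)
Your treatment of density (which the paper leaves to the reader) and of aperiodic kneading is essentially sound, and your step (A) --- that $\mc$ and $\md$ lie in distinct $\sim_\Ta$-classes, via the criticality budget and finiteness of classes --- is a correct argument. The genuine gap is step (B), which is the real content of the lemma: you never supply a mechanism that converts density of the critical orbits into a contradiction with the existence of an extra vertex $e$ in the class $\mg\supset\mc$. Your closing sentence, ``I would show that the itinerary of $e$ cannot match that of $a$,'' restates the desired conclusion rather than proving it; worse, it is not even the right target, since by Definition~\ref{lamtheta} $\sim_\Ta$ is the \emph{smallest closed equivalence relation} generated by the coincidences $i^+(x)=i^-(y)$, so two angles can be $\sim_\Ta$-equivalent through chains of identifications and limits without their own itineraries matching. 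The paper closes exactly this gap with a quantitative expansion argument: assuming $|\mg|\ge 3$, it first shows (by a case analysis that \emph{allows} $\si^k(\mg)=\mh$) that all forward images of $\mg$ and $\mh$ are non-degenerate; then, using the formula $\diam(\si(\ell'))=3\diam(\ell')$ for $\diam(\ell')\le 1/6$ and $\diam(\si(\ell'))=3\left|\diam(\ell')-1/3\right|$ otherwise, together with unlinkedness of every class with $\ch(\mg)\cup\ch(\mh)$, it derives a uniform lower bound $\diam(\si^k(\mg))\ge 3\e>0$; finally, since hulls of classes are dense in $\disk$, there is a class $\mk$ with a complementary arc $A$ of diameter less than $\e$, which the orbit of $\mg$ can never enter --- contradicting density of the orbit of $\mc$. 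Some estimate of this type (or a substitute with the same effect) is indispensable; without it, (B) is a plan, not a proof.

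Two smaller points. First, within (B) you assert the leaf-orbit of $\{v_1,\si_3(e)\}$ cannot meet the class of $\md$ ``else $\mc$ would map to an endpoint of $\md$.'' This conflates $\mh$ with $\md$: at that stage of the proof $\mh$ may properly contain $\md$, so $\si^k(\mg)=\mh$ only yields $\si^k(a)\in\mh$, not that $\si^k(a)$ is an endpoint of $\md$. The paper does not exclude this case; it accommodates it (its case of $\si^k(\mg)=\mh$) and feeds it into the diameter estimate. Second, in the density argument your final ``further perturbation of $\mc$'' is circular: moving $\mc$ destroys the relation $\si_3(\md)=\si_3^N(\mc)$ that defines $\md$. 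The fix is to choose $N$ so that $\si_3^{N-1}(\mc)$ lands near the $\si_3$-preimage of $\si_3(\md_0)$ that is \emph{not} an endpoint of $\md_0$, and to take $\md$ to be the critical chord over $\si_3^N(\mc)$ not having $\si_3^{N-1}(\mc)$ as an endpoint; then density of the orbit of $\mc$ alone rules out every endpoint coincidence, since any such coincidence would force the critical value orbit to be periodic.
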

\begin{proof}
  The fact that $\K$ is dense in $\A_3$ is easy and left to the
  reader. Consider some $\Ta = \{\mc, \md\} \in \K$. By
  Theorem~\ref{t:condenscript}, $\Ta$ has aperiodic kneading.  Let
  $\g$ be the critical $\sim_\Ta$-class containing $\mc$, and $\h$ the
  critical class containing $\md$. It is easy to see that if $\g$
  contains at least three points, then $|\si(\g)|\ge 2$. Indeed, consider two cases.
  If $\g$ maps to its image
  in the two-to-one fashion, then $|\si(\g)|\ge 2$ is obvious. If $\g$ maps to its image in the three-to-one fashion then
  $\g=\h$ contains four endpoints of the leaves $\mc$ and $\md$, so again $|\si(\mg)| \ge 2$.
  Similarly, if $|\h|\ge 3$ then $|\si(\h)|\ge 2$.

  Suppose for contradiction that $\g$ contains at least three points.
  We will first show that then all forward images of all critical classes
  of $\sim_\Ta$ are non-degenerate.  Indeed, note that neither $\g$
  nor $\h$ may eventually map onto itself, since the orbits of $\mc$
  and $\md$ are dense in $\ucirc$.  This further implies that, if
  $\g$ maps onto $\h$, then $\h$ cannot map onto $\g$.  We
  consider three cases.
  \begin{enumerate}
  \item Suppose that $\g=\h$. Since $|\si(\g)| \ge 2$ and $\mg$ is not periodic,
    it is not precritical, so $|\si^l(\g)|=|\si^l(\h)|\ge 2$ for all $l\ge 0$.
  \item Suppose that $\si^k(\g)=\h$ for some $k\ge 1$. Since $|\si(\g)|\ge 2$ and
    $\mc$ never maps into $\md$, we see that
    $\h$ contains at least three points (the endpoints of $\md$ and
    the point $\si^k(\mc)$). Therefore by the above $|\si(\h)| \ge 2$.  As noted before,
    $\h$ is not precritical, so $|\si^k(\mg)|$ and $|\si^k(\mh)|$
    are both at least two for all $k$.
  \item If $\mg$ never maps onto $\mh$, then $|\si^k(\mg)| \ge 2$ for
    all $k$, since $\mg$ is not precritical and contains at least
    three points.  Since $\mc$ and $\md$ have a common image, so do
    $\mg$ and $\mh$, and $|\si^k(\mh)| \ge 2$ for all $k$.
  \end{enumerate}

We will use the metric where the distance between two
  points on $\uc$ is the length of the shortest arc in $\uc$ joining
  them. By the diameter of a chord we will mean the distance between
  its endpoints. Let us show that $\diam(\si^k(\g))$ is bounded away
  from $0$. It is easy to see that, for any chord $\ell'$,
  \begin{equation}\label{stupidsasha}
  \diam(\si(\ell'))=\begin{cases}
  3\diam(\ell') & \text{if $\diam(\ell')\le 1/6$}\\
  3|\diam(\ell')-1/3| & \text{if $1/6\le \diam(\ell')$.}
  \end{cases}
  \end{equation}
  This implies that $\diam(\si(\ell')) \ge \diam(\ell')$ if and only
  if $\diam(\ell') \le 1/4$. Hence, every class of diameter less
  than $1/4$ maps to a class of larger diameter. Let $\ell$ be the
  chord on $\bd(\ch(\mg))\cup\bd(\ch(\mh))$ of length closest to
  $1/3$; since $\si(\mg)$ and $\si(\mh)$ are non-degenerate, $\e =
  |\diam(\ell) - 1/3|$ is positive.  Since $\sim$-classes are
  unlinked, $|\diam(\ell')-1/3|\ge \e$ for any chord $\ell'$ from
  the boundary of the convex hull of a $\sim$-class.  Hence, by
  Equation~\ref{stupidsasha} no class of diameter at least $1/4$ has
  an image of diameter less than $3 \e$.  In particular,
  $\diam(\si^k(\g)) \ge 3 \e$ for all $k$.

  Since the convex hulls of classes are dense in $\disk$, we can choose
  a class $\mk$ so that there exists a component $A$ of $\uc\sm \mk$
  of diameter less than $\e$.  Since $\diam(\si^k(\g)) \ge 3 \e$,
  the orbit of $\mg$ can never enter $A$.  This contradicts that the
  orbit of $\mc$ is dense.  We conclude that the classes $\mg$ and
  $\mh$ are leaves.
\end{proof}

\begin{theorem}\label{t:meager} The set $\WT_3$ is of first category in $\A_3$.
\end{theorem}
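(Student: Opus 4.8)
The plan is to show that $\WT_3$ is a countable union of nowhere dense sets. The natural decomposition is by the area (equivalently, by a lower bound on the diameters of the vertices) of the wandering triangle: for each $n$, let $\WT_3^{(n)}$ be the set of critical portraits $\Ta\in\WT_3$ that are compatible with a WT-lamination whose wandering triangle has area at least $1/n$. By Theorem~\ref{bl}, in the cubic case a WT-lamination has exactly one grand orbit of wandering triangles, so this is well-defined up to taking images, and every WT-portrait lies in some $\WT_3^{(n)}$; thus $\WT_3=\bigcup_n \WT_3^{(n)}$. It therefore suffices to show each $\WT_3^{(n)}$ is nowhere dense, i.e.\ that its closure has empty interior.

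For nowhere density I would combine two facts. First, I would argue that each $\WT_3^{(n)}$ is \emph{closed}: if $\Ta_j\to\Ta$ with each $\Ta_j$ compatible with a WT-lamination carrying a triangle of area $\ge 1/n$, then a compactness/limiting argument on the wandering triangles (their vertices lie in a compact space, the area bound is a closed condition, and the limit of wandering triangles of bounded-below area remains a nondegenerate triangle with pairwise disjoint forward images) produces a WT-lamination compatible with $\Ta$ whose triangle still has area $\ge 1/n$. Second, I would show each $\WT_3^{(n)}$ misses the dense set $\K$ of Lemma~\ref{lem:classK}. This is the crux: by Lemma~\ref{lem:classK}, every $\Ta\in\K$ has aperiodic kneading, so by Theorem~\ref{kiwistruct} the relation $\sim_\Ta$ is the \emph{unique} $\Ta$-compatible lamination; moreover its critical classes are leaves. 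But a cubic WT-lamination, by the last sentence of Theorem~\ref{bl}, must have \emph{both critical classes as leaves with disjoint forward orbits and a genuine wandering triangle}, and uniqueness forces any WT-lamination compatible with $\Ta$ to equal $\sim_\Ta$. Since $\sim_\Ta$ is a lamination with leaves as critical classes whose associated Julia set is a dendrite on which $f_{\sim_\Ta}$ is condense (Theorem~\ref{t:condenscript} applies once we know the orbits are dense), it has \emph{no} wandering triangle—condensity and Lemma~\ref{l:condens2}(4) rule out proper periodic subcontinua, and more directly a wandering triangle would force the two critical leaves to have the prescribed disjoint orbits, contradicting that in $\K$ the leaves $\mc,\md$ eventually map to a \emph{common} point. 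Hence $\K\cap\WT_3=\0$.

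Putting these together: $\WT_3^{(n)}$ is closed and disjoint from the dense set $\K$, so its complement contains $\K$ and is therefore dense; a closed set with dense complement is nowhere dense. Thus each $\WT_3^{(n)}$ is nowhere dense, and $\WT_3=\bigcup_n\WT_3^{(n)}$ is of first category in $\A_3$, as claimed.

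The step I expect to be the main obstacle is verifying that $\WT_3^{(n)}$ is \emph{closed} in the compact-unlinked topology of Definition~\ref{cu}. The area/diameter lower bound is exactly what prevents the wandering triangle from degenerating in the limit, but one must check that the limit of the compatible WT-laminations (extracted via closedness of the space of laminations and the pairwise-disjointness of triangle orbits, which is itself a closed condition since the convex hulls stay uniformly nondegenerate) is again a \emph{WT-lamination compatible with the limit portrait}—in particular that the forward orbits of the limiting triangle remain pairwise disjoint rather than collapsing to a preperiodic or precritical configuration. This is where the uniform area bound $1/n$ does the essential work, and where I would spend the most care; the disjointness from $\K$, by contrast, follows cleanly from the uniqueness in Theorem~\ref{kiwistruct} and the structure theorem Theorem~\ref{bl}.
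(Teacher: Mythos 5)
Your skeleton is the same as the paper's: decompose $\WT_3$ into pieces $\W_n$ (your $\WT_3^{(n)}$) according to a lower bound $1/n$ on the area of the wandering triangle, and play each piece against the dense family $\K$ of Lemma~\ref{lem:classK}. Your argument that $\WT_3^{(n)}\cap\K=\0$ is also correct and is exactly the paper's: by Theorem~\ref{bl} a cubic WT\nobreakdash-lamination must have critical leaves with \emph{disjoint} forward orbits, while for $\Ta\in\K$ the leaves $\mc$ and $\md$ have a common image. The genuine gap is the step you yourself flag as the main obstacle: the claim that $\WT_3^{(n)}$ is \emph{closed}. Your sketch asserts that a limit of wandering triangles of area $\ge 1/n$ ``remains a nondegenerate triangle with pairwise disjoint forward images.'' The area bound does keep the limit triangle $\T$ nondegenerate, but it does nothing to keep its forward images pairwise disjoint: for each fixed $i$ the images $\si_3^s(\T_i)$ and $\si_3^{s+t}(\T_i)$ are disjoint, yet their mutual distance may tend to $0$ as $i\to\infty$, so the limit triangle can become preperiodic and the limit portrait can fall outside $\WT_3$ altogether. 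This is not a hypothetical failure: the paper's own proof shows it is exactly what happens whenever $\Ta_i\in\W_n$ converge to some $\Ta\in\K$ --- there the limit triangle $\T$ is shown (via one-sided itineraries and Claim A) to lie in a single $\sim_\Ta$-class $\T'$, which is finite by Theorem~\ref{kiwistruct}, not wandering by Theorem~\ref{bl}, not precritical by Lemma~\ref{lem:classK}, hence \emph{preperiodic}; the contradiction is then purely geometric (the disjoint triangles $\si_3^s(\T_i)$ and $\si_3^{s+t}(\T_i)$ cannot both converge to the nondegenerate triangle $\si_3^s(\T)$). So your closedness claim is unsupported, the justification sketched for it assumes precisely what must be proved, and it is quite possibly false.

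The repair is to prove less: you never need $\WT_3^{(n)}$ to be closed, only that $\overline{\WT_3^{(n)}}\cap\K=\0$, i.e.\ that no sequence in $\WT_3^{(n)}$ converges to a point of $\K$. That weaker statement suffices for nowhere density (a set whose closure misses a dense set has nowhere dense closure), and it is exactly what the paper proves; but establishing it requires the entire second half of the paper's argument --- the observation that orbits of vertices of $\T$ meet $\bigcup\Ta$ at most once, the itinerary argument placing $\T$ inside one $\sim_\Ta$-class, the trichotomy wandering/precritical/preperiodic, and the final geometric contradiction. Mere disjointness of $\WT_3^{(n)}$ from $\K$, which is all your proposal actually establishes, says nothing about the closure and hence nothing about nowhere density.
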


\begin{proof}
  Let $\W_n$ be the set of critical portraits $\Ta\in \WT_3$ such
  that there is a $\sim_\Ta$-class $\T$ which is a wandering
  triangle and $\ch(\T)$ has area at least $1/n$. We will show that
  $\W_n$ is nowhere dense by showing that $\overline{\W_n} \cap \K =
  \emptyset$.

  By Theorem~\ref{bl}, $\W_n$ is disjoint from $\K$ for every $n$.
  Suppose that there is a sequence $(\Ta_i)_{i=1}^\infty$ of
  elements of $\W_n$ which converges to a critical portrait $\Ta =
  \{\mc, \md\}\in \K$. By way of contradiction we will prove that
  $\Ta\in \K$ is impossible. For each $i$ set $\sim_i=\sim_{\Ta_i}$
  and let $\T_i$ be a wandering triangle in $\sim_i$ such that
  $\ch(\T_i)$ has area at least $1/n$. We may assume that
  $(\T_i)_{i=1}^\infty$ converges to a triangle $\T=\{a, b, c\}$,
  with area of $\ch(\T)$ at least $1/n$.

  Let us prove that $\T$ is contained in some $\sim_\Ta$-class $\T'$.
  For any fixed $k$ and for every $i$, $\si^k(\T_i)$ is contained in a
  $\sim_i$-unlinked class.  Hence, we have the following.

  \smallskip

  \noindent{\textbf{Claim A.}} \emph{For every $k\ge 0$ we have that $\lim_{i \to \infty}
  \si^k(\T_i) = \si^k(\T)$ is a subset of the \emph{closure} of a
  $\sim_\Ta$-unlinked class.}

  \smallskip

  Let us show that every pair of vertices of $\T$ are
  $\sim_\Ta$-equivalent. The crucial observation here is that the
  orbits of the vertices of $\T$ each cannot intersect $\bigcup \Ta$
  more than once ($\bigcup \Ta$ denotes the collection of the
  endpoints of critical leaves in $\Ta$). Indeed, if an orbit
  intersects $\bigcup \Ta$ twice, then an endpoint of one critical
  leaf eventually maps to an endpoint of either the same critical
  leaf or the other critical leaf, contradicting that $\Ta \in \K$.

  By Lemma~\ref{lem:classK}, $\mc$ and $\md$ are distinct classes of
  $\sim_\Ta$. Let us show that there exists $m\ge 1$ such that
  $\si^m_3(\mc)$ and $\si^m_3(\md)$ belong to distinct $\Ta$-unlinked
  classes. Indeed, otherwise $\si_3(\mc)$ and $\si_3(\md)$ are
  distinct points of $\uc$ (they are distinct because $\mc$ and $\md$
  are disjoint) which belong to the same $\sim_\Ta$-class; this
  contradicts the conclusion of Lemma~\ref{lem:classK} that the
  $\sim_\Ta$-class of $\si_3(\mc)$ is a singleton.

  We can now show that $a \sim_\Ta b$ for any vertices $a, b \in
  \T$. Indeed, suppose first that for some $l \ge 0$ the points $\si^l(a)$ and
  $\si^l(b)$ are contained in the same critical class. By the
  previous observation that each orbit of a vertex of $\T$ can
  intersect $\bigcup \Ta$ at most once, for every $k \neq l$ we have
  that $\si^k(a)$ and $\si^k(b)$ are not in $\bigcup \Ta$. We can
  therefore, by Claim A, choose two opposite (positive and negative) sides of $a$
  and $b$ so that the corresponding one-sided itineraries of $a$ and
  $b$ coincide, so $a \sim_\Ta b$.

  On the other hand, suppose $a$ and $b$ never simultaneously map to
  the endpoints of a critical leaf. We now note that, for any $l \ge
  0$, the points $\si^l(a)$ and $\si^l(b)$ cannot belong to
  different critical classes, or else $\si^{l+m}_3(a)$ and
  $\si^{l+m}_3(b)$ would belong to distinct $\Ta$-unlinked classes,
  a contradiction with Claim A. This again implies that we can chose
  two distinct sides (positive and negative) such that the
  corresponding one-sided itineraries of $a$ and $b$ coincide. Thus,
  in any case $a \sim_\Ta b$ and so $\T$ is contained in a
  $\sim_\Ta$-class $\T'$.

  By Theorem~\ref{kiwistruct}, $\T'$ is finite. Since $\Ta \in \K$,
  $\T'$ is not wandering by Theorem~\ref{bl}, and $\T'$ is not
  precritical by Lemma~\ref{lem:classK}. Hence, $\T'$ is
  preperiodic, and either $\T$ itself is preperiodic or its future
  images cross each other inside $\disk$. As the latter is
  impossible by continuity, we may assume that there exist powers
  $s$ and $t>0$ such that $\si^s_3(\T) = \si^{s+t}_3(\T)$. Again by
  continuity $\si^s_3(\T_i)$ and $\si_3^{s+t}(\T_i)$ approach
  $\si_3^s(\T)$ in the Hausdorff metric while the area of $\ch(\T)$
  is at least $1/n$. For geometric reasons this contradicts that
  $\si_3^t(\T_i)$ and $\si_3^{s+t}(\T_i)$ are disjoint for all $i$.
  Therefore, $\Ta \notin \K$.

  We have established that $\W_n$ is nowhere dense in
  $\A_3$, so $\bigcup_{n=1}^\infty \W_n = \WT_3$ is a first category
  subset of $\A_3$.
\end{proof}

\medskip
Received xxxx 20xx; revised xxxx 20xx.
\medskip

\end{document}